\theoremstyle{plain}
\newtheorem{thm}{Theorem}[section]
\newtheorem{prop}[thm]{Proposition}
\newtheorem{lem}[thm]{Lemma}
\theoremstyle{definition}
\newtheorem{prob}{Problem}[section]
\theoremstyle{remark}
\newtheorem{rem}{Remark}[section]
\newtheorem{ex}[rem]{Example}
\newcommand{\forme}[1]{}
\newcommand{\Ot}{{\mathbf O}_{\mathrm \theta}}
\newcommand{\Or}{{\mathbf O}^{\mathrm \theta}}
\title{A family of non-Schurian $p$-Schur rings over groups of order $p^3$}
\author[K.~Kim]{Kijung Kim}
\address{Department of Mathematics, Pusan National University, Busan 609-735, Republic of Korea}
\email{knukkj@pusan.ac.kr}
\date{\today}
\subjclass[2010]{20B05, 20B25}
\begin{document}
\maketitle

\begin{abstract}
Recently, it was proved that every commutative $p$-Schur ring over a group of order $p^3$ is Schurian.
In this article, we consider the Schurity problem of non-commutative $p$-Schur rings over groups of order $p^3$.
In particular, it is given a family of non-Schurian $p$-Schur rings over groups of order $p^3$.
\end{abstract}

{\footnotesize {\bf Key words:} $p$-Schur ring; Schurian; Cayley scheme.}
\footnotetext{This research was supported by Basic Science Research Program through the National Research Foundation of Korea(NRF) funded by the Ministry of Education (2013R1A1A2005349).}

\section{Introduction}\label{sec:intro}

Let $H$ be a finite group.
We denote by $\mathbb{Q}H$ the group algebra of $H$ over the rational number field $\mathbb{Q}$.
For a nonempty subset $T \subseteq H$, we set $\underline{T}:= \sum_{t \in T}t$ which is treated as an element of $\mathbb{Q}H$.

A subalgebra $\mathcal{A}$ of the group algebra $\mathbb{Q}H$ is called a \textit{Schur ring} (briefly \textit{S-ring}) over $H$
if there exists a partition $\mathrm{Bsets}(\mathcal{A}):= \{ T_0, T_1, \dotsc, T_r \}$ of $H$ satisfying the following conditions:
\begin{enumerate}
\item $\{ \underline{T_i} \mid T_i \in \mathrm{Bsets}(\mathcal{A}) \}$ is a linear basis of $\mathcal{A}$;
\item $T_0 = \{ 1_H\}$;
\item $T_i^{-1}:=\{ t^{-1} \mid t \in T_i \} \in \mathrm{Bsets}(\mathcal{A})$ for all $T_i \in \mathrm{Bsets}(\mathcal{A})$.
\end{enumerate}
A S-ring $\mathcal{A}$ over a $p$-group $H$ is called a \textit{$p$-Schur ring} (briefly \textit{$p$-S-ring})
if the size of every element in $\mathrm{Bsets}(\mathcal{A})$ is a power of $p$, where $p$ is a prime.

Let $G$ be a subgroup of $\mathrm{Sym}(H)$ containing the right regular representation of $H$.
We denote by $T_0 = \{1_H \}, T_1 , \dotsc, T_r$ the orbits of the stabilizer $G_{1_H}$.
The \textit{transitivity module} $V(H, G_{1_H})$ of $G$ is the vector space spanned by $\{\underline{T_i} \mid 0 \leq i \leq r\}$.
It was proved in \cite{wie} that $V(H, G_{1_H})$ is a S-ring over $H$.
Customarily, an S-ring $\mathcal{A}$ over $H$ is called \textit{Schurian} if $\mathcal{A}$ is the transitivity module $V(H, G_{1_H})$
for some group $G$ containing the right regular representation of $H$.
A family of S-rings which are not Schurian was given in \cite[Theorem 26.4]{wie}.
It is known that every S-ring over a cyclic $p$-group is Schurian (see \cite{mpschur}).
In $1979$, M. Klin conjectured that every S-ring over a cyclic group is Schurian.
But, it was proved in \cite{pono} that there exist non-Schurian S-rings over cyclic groups.
Recently, it was proved that every commutative $p$-S-ring over a group of order $p^3$ is Schurian (see \cite{kim2013,kim2014,sw}).
In this article, we consider the Schurity problem of non-commutative $p$-Schur rings over groups of order $p^3$.
First of all, we observe non-commutative $7$-S-rings over a group of order $7^3$.
\begin{ex}\label{s-rings:example}
Let $H_0 = \langle a, b \mid a^{7^2} = b^7 = 1, ab = ba^{8} \rangle$ be a non-abelian group of order $7^3$.
Then the following partitions give two $7$-S-rings $\mathcal{A}_i$ over $H_0$ $(i= 1, 2)$:
\[\mathrm{Bsets}(\mathcal{A}_i)= \{ \{l \} \mid l \in \langle a^7, b \rangle \}
\cup \bigcup_{1 \leq j \leq 6} \{ a^j \langle b(a^7)^{x_i(j)} \rangle (a^7)^k \mid 0 \leq k \leq 6 \},\]
where $(x_1(1), x_1(2), x_1(3), x_1(4), x_1(5), x_1(6)) = (0,3,6,2,5,1)$

and $(x_2(1), x_2(2), x_2(3), x_2(4), x_2(5), x_2(6)) = (0,4,2,5,6,1)$.
\end{ex}
In Example \ref{s-rings:example}, the action of $\langle \phi \rangle$ on $H_0$ induces $\mathrm{Bsets}(\mathcal{A}_1)$,
where $\phi : a \mapsto ab, b \mapsto b$ is an automorphism of $H_0$.
This means that $\mathcal{A}_1$ is Schurian.
But, no automorphism of $H_0$ induces $\mathrm{Bsets}(\mathcal{A}_2)$.
So, one might be curious what can be said in the case of $\mathcal{A}_2$.
An attempt to verify the Schurity of $\mathcal{A}_2$ leads to our results.

In Example \ref{s-rings:example}, the group generated by $\{ T^{-1}T \mid T \in \mathrm{Bsets}(\mathcal{A}_i)\}$ $(i=1,2)$
is an elementary abelian group of rank $2$.
We denote it by $\Or(\mathcal{A}_i)$.
Two $7$-S-rings have common properties as follows:
\begin{enumerate}
\item For each $T \in \mathrm{Bsets}(\mathcal{A}_i)$, $T^{-1}T$ is a proper subgroup of $\Or(\mathcal{A}_i)$;
\item For $T, T' \in \mathrm{Bsets}(\mathcal{A}_i)$ with $T\Or(\mathcal{A}_i) \neq T'\Or(\mathcal{A}_i)$,
$T^{-1}T$ and $T'^{-1}T'$ are different.
\end{enumerate}

In general, we deal with $p$-S-rings satisfying the above properties.
Our first step is to determine $\mathrm{Bsets}(\mathcal{A})$.
Let $\mathcal{A}$ be a $p$-S-ring over a non-abelian group $H$ of order $p^3$ satisfying the above properties (see $(\ref{A})$ and $(\ref{B})$ for details).
If $T_1, \dotsc, T_{p-1}$ are elements of $\mathrm{Bsets}(\mathcal{A})$ such that $T_i^{-1}T_i \neq T_j^{-1}T_j$ for distinct $1 \leq i,j \leq p-1$,
then we have
\[\mathrm{Bsets}(\mathcal{A}) = \{ \{l \} \mid l \in \Or(\mathcal{A}) \} \cup \bigcup_{1 \leq i \leq p-1} \{ T_iz^j \mid 0 \leq j \leq p-1 \},\]
where $z$ is a nontrivial element of the center of $H$.
Based on this result, we are able to give a criterion for Schurity of $\mathcal{A}$ and
verify the existence of non-Schurian $p$-S-rings over groups of order $p^3$.

This article is organized as follows.
In Section \ref{sec:pre}, we review definitions and known facts about S-rings.
In Section \ref{sec:main}, we give a criterion when $p$-S-rings over groups of order $p^3$ with some conditions are Schurian.
In Section \ref{sec:comb}, we introduce a problem related to constructions of $p$-S-rings.
In Section \ref{sec:nonschur}, we construct non-Schurian $p$-S-rings over groups of order $p^3$.

\section{Preliminaries}\label{sec:pre}
Throughout this article, we use the notations based on \cite{mpschur,zies2}.

For a group $H$, we denote by $R_H$ the set of all binary relations on $H$
invariant with respect to the right regular representation of $H$.
Then the mapping
\[ 2^H \rightarrow R_H  ~(T \mapsto R_H(T)), \]
where $R_H(T) = \{ (h, th) \mid h \in H, t \in T \}$, is a bijection.
If $\mathcal{A}$ is a S-ring over $H$, then the pair
\[ \mathcal{C}(\mathcal{A}) = (H, R_H(\mathrm{Bsets}(\mathcal{A}))), \]
where $R_H(\mathrm{Bsets}(\mathcal{A})) = \{ R_H(T) \mid T \in \mathrm{Bsets}(\mathcal{A}) \}$,
is called a \textit{Cayley (association) scheme} over $H$ (see Subsection \ref{subsec:scheme} for association schemes).

Let $\mathcal{C} = (H, R)$ be a Cayley scheme.
For each $r \in R$, we set
\[r(1_H) = \{h \in H \mid (1_H, h) \in r \}.\]
Then the vector space spanned by $\{ \underline{r(1_H)} \mid r \in R \}$ is a S-ring over $H$.

\begin{thm}[See \cite{mkk}]\label{mk}
The correspondence $\mathcal{A} \mapsto \mathcal{C}(\mathcal{A}), \mathcal{C}(\mathcal{A}) \mapsto \mathcal{A}$
induces a bijection between the S-rings and Cayley schemes over the group $H$ that preserves the natural partial orders on these sets.
\end{thm}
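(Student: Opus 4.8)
The plan is to reduce everything to the elementary bijection $T \mapsto R_H(T)$ between subsets of $H$ and right-invariant binary relations, which is already recorded above. First I would pin down its inverse: for a relation $r$ invariant under the right regular representation, invariance gives $(g,h) \in r$ exactly when $(1_H, hg^{-1}) \in r$, so $r = R_H(r(1_H))$ and, conversely, $R_H(T)(1_H) = T$. Thus $\beta \colon T \mapsto R_H(T)$ and $\gamma \colon r \mapsto r(1_H)$ are mutually inverse. Because $\beta$ carries disjoint unions to disjoint unions and sends $H$ to $H \times H$, it restricts to a bijection between partitions of $H$ and partitions of $H \times H$ into right-invariant relations; this is the combinatorial backbone of the whole statement.

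Next I would check that under $\beta$ the defining conditions of an S-ring correspond exactly to the axioms of a Cayley (association) scheme. The partition condition is immediate from the previous paragraph. The normalisation $T_0 = \{1_H\}$ corresponds to the diagonal relation $R_H(\{1_H\})$. For the inverse-closure axiom, a direct computation shows that the transpose of $R_H(T)$ is $R_H(T^{-1})$ (indeed $(g,h)$ lies in the transpose iff $(h,g)\in R_H(T)$, i.e. $gh^{-1}\in T$, i.e. $hg^{-1}\in T^{-1}$, i.e. $(g,h)\in R_H(T^{-1})$), so $\mathrm{Bsets}(\mathcal{A})$ being closed under $T \mapsto T^{-1}$ is equivalent to the relation set being closed under transposition. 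The one genuinely substantive point is multiplicative closure: $\mathcal{A}$ is closed under the algebra product iff, writing $\underline{T_i}\,\underline{T_j} = \sum_k \lambda_{ij}^k \underline{T_k}$, the coefficients $\lambda_{ij}^k$ are well-defined nonnegative integers, and these $\lambda_{ij}^k$ are precisely the intersection numbers of the relations $R_H(T_i)$. Matching these structure constants with the intersection numbers is the heart of the argument and is where I expect the real work to lie; everything else is bookkeeping on $\beta$ and $\gamma$.

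Given that both maps land in the correct category, the bijectivity asserted in the theorem is then formal: starting from $\mathcal{A}$ with basic sets $\{T_i\}$, the scheme $\mathcal{C}(\mathcal{A})$ has relations $\{R_H(T_i)\}$, and applying $\gamma$ returns $\{R_H(T_i)(1_H)\} = \{T_i\}$, which spans $\mathcal{A}$ again; the reverse composition is identical. Finally, for the order-preservation claim I would use that both natural orders are refinement orders: $\mathcal{A} \subseteq \mathcal{A}'$ holds exactly when each $T \in \mathrm{Bsets}(\mathcal{A})$ is a union of members of $\mathrm{Bsets}(\mathcal{A}')$, and $\mathcal{C} \leq \mathcal{C}'$ exactly when each relation of $\mathcal{C}$ is a union of relations of $\mathcal{C}'$. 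Since $\beta$ preserves disjoint unions, the statement that $T$ is a union of certain $T'$ holds iff $R_H(T)$ is the corresponding union of the $R_H(T')$, so the bijection is an isomorphism of the two partially ordered sets, completing the proof.
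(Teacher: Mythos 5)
The paper itself offers no proof of this statement---it is quoted from Klin's work \cite{mkk} and used as a black box---so there is no internal argument to compare yours against; the closest touchstone is Lemma~\ref{lem:con-number}, which silently assumes exactly the identification you defer. Your overall architecture (the mutually inverse maps $T \mapsto R_H(T)$ and $r \mapsto r(1_H)$, the matching of partitions, of $T_0=\{1_H\}$ with the diagonal, and of inversion with transposition, plus the refinement-order argument) is the standard route and is carried out correctly.

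The gap is that you explicitly name the identification of the structure constants $\lambda_{ij}^k$ with the intersection numbers as ``the heart of the argument'' and then do not perform it---and with this paper's conventions it does not come out the way your phrasing suggests. Since $R_H(T)=\{(h,th)\mid h\in H,\ t\in T\}$, one computes $xR_H(T_i)=T_ix$ and $yR_H(T_j)^{*}=T_j^{-1}y$, so for $(x,y)\in R_H(T_k)$ with $y=t_kx$ one gets
\[
a_{R_H(T_i)R_H(T_j)R_H(T_k)}=\bigl|T_i\cap T_j^{-1}t_k\bigr|
=\bigl|\{(t_i,t_j)\in T_i\times T_j : t_jt_i=t_k\}\bigr|,
\]
which is the coefficient of $t_k$ in $\underline{T_j}\cdot\underline{T_i}$, not in $\underline{T_i}\cdot\underline{T_j}$. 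So the correspondence reverses the order of the structure constants (it is an anti-isomorphism at that level). The theorem as stated survives, because the linear span of the $\underline{T_i}$ is closed under products in one order if and only if it is closed in the other (the basis is inversion-closed and $(\underline{T_i}\,\underline{T_j})^{-1}=\underline{T_j^{-1}}\,\underline{T_i^{-1}}$), and correspondingly the regularity of $|xs\cap yt^{*}|$ for all ordered pairs $(s,t)$ is equivalent to the regularity for all pairs in the opposite order. But this is precisely the ``real work'' you postponed, and it needs to be written out---both to see that the scheme axiom (iii) really is equivalent to multiplicative closure of the span, and to state correctly how the two families of constants correspond.
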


Theorem \ref{mk} means that the S-rings and Cayley schemes provide two different ways to talk about the same thing.
In the rest of this section, we consider definitions in the theory of association schemes and translate them into S-rings.

\subsection{Association schemes}\label{subsec:scheme}

Let $X$ be a nonempty finite set.
Let $S$ denote a partition of $X \times X$. Then we say that $(X, S)$ is
an \textit{association scheme} (briefly  \textit{scheme}) if it satisfies the following conditions:
\begin{enumerate}
\item $1_X := \{ (x, x) \mid  x \in X \} \in S$;
\item For each $s \in S$, $s^* := \{(x, y) \mid (y, x) \in s \} \in S$;
\item For all $ s, t, u \in S$ and $x, y \in X$, $a_{stu} :=| xs \cap yt^* |$ is constant whenever $(x, y) \in u$,
where $xs:=\{ y \in X \mid (x,y) \in s \}$.
\end{enumerate}
For each $s \in S $, we abbreviate $ a_{ss^*1_X}$ as $n_s$, which is called the \textit{valency} of $s$.
Put $n_S = \sum_{s \in S} n_s$.
We call $n_S$ the \textit{order} of $S$.
A scheme $(X,S)$ is called \textit{$p$-valenced} if the valency of every element is a power of $p$, where $p$ is a prime.
In particular, a $p$-valenced scheme $(X,S)$ is called a \textit{$p$-scheme} if $n_S$ is also a power of $p$.

Let $P$ and $Q$ be nonempty subsets of $S$. We define $PQ$ to be the set of all elements $s \in S$ such that there exist elements
$p \in P$ and $q \in Q$ with $a_{pqs}\neq 0$. The set $PQ$ is called the \textit{complex product} of $P$ and $Q$.
If one of the factors in a complex product consists of a single element $s$, then one usually writes  $s$ for $\{ s \}$.

A nonempty subset $T$ of $S$ is called \textit{closed} if $TT \subseteq T$.
Note that a subset $T$ of $S$ is closed if and only if $\bigcup_{t \in T}t$ is an equivalence relation on $X$.
A closed subset $T$ is called \textit{thin} if all elements of $T$ have valency 1.
The set $\{ s \in S \mid n_s=1 \}$ is called the \textit{thin radical} of $S$ and denoted by $\Ot(S)$.

For binary relations $r, s \subseteq X \times X$, we set $r \cdot s = \{ (\alpha,\gamma) \mid (\alpha,\beta) \in r, (\beta,\gamma) \in s ~~\text{for some}~~ \beta \in X\}$. We call $r \cdot s$ the relational product of $r$ and $s$.
Note that $T$ is thin if and only if $T$ is a group under the relational product.

A closed subset $T$ of $S$ is called \textit{strongly normal} in $S$, denoted by $T \lhd^\sharp S$, if $s^* T s \subseteq T$ for every $s \in S$.
We put $\Or(S) := \bigcap_{T \lhd^\sharp S} T $ and call it the \textit{thin residue} of $S$.
Note that $\Or(S) = \langle \bigcup_{s \in S} s^*s \rangle$ (see \cite[Theorem 2.3.1]{zies}).

Let $(X,S)$ and  $(X_1, S_1)$ be association schemes. A bijective map $\phi$ from $X \cup S$ to $X_1 \cup S_1$ is called an \textit{isomorphism} if it satisfies the following conditions:
\begin{enumerate}
\item $X^\phi \subseteq X_1$ and $S^\phi \subseteq S_1$;
\item For all $x, y \in X$ and $s \in S$ with $(x,y) \in s$, $(x^\phi, y^\phi) \in s^\phi$.
\end{enumerate}
An isomorphism $\phi$ from $X \cup S$ to $X \cup S$ is called an \textit{automorphism} of $(X,S)$
if $s^{\phi}=s$ for all $s \in S$.
We denote by $\mathrm{Aut}(X,S)$ the automorphism group of $(X,S)$.

\begin{lem}[See \cite{zies}]\label{lem:constant}
Let $(X, S)$ be an association scheme.
For $u, v, w \in S$, we have the following:
\begin{enumerate}
\item $a_{uwv} n_v = a_{u^\ast vw} n_w = a_{v w^\ast u} n_u$;
\item $n_u n_v = \sum_{s \in S} a_{uvs} n_s$.
\end{enumerate}
\end{lem}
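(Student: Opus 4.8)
The plan is to prove both identities by counting suitable configurations of points of $X$ in two (respectively three) different ways. The only preliminary fact I need is that, for every relation $r \in S$, the valency $n_r = |xr|$ is independent of $x$: indeed $n_r = a_{rr^\ast 1_X}$, and evaluating the right-hand side at a diagonal pair $(x,x) \in 1_X$ gives $|xr \cap xr| = |xr|$, which is constant because the structure constants are. Consequently the number of ordered pairs $(x,y)$ lying in a fixed relation $r$ equals $|X|\,n_r$. These two observations are the engine of the whole argument.

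For part (ii) I would fix a point $x \in X$ and count the number $N_x$ of pairs $(z,y)$ with $(x,z) \in u$ and $(z,y) \in v$. Running over the $n_u$ choices of $z \in xu$ and then the $n_v$ choices of $y \in zv$ gives $N_x = n_u n_v$. On the other hand, partitioning the endpoints $y$ according to the unique relation $s \in S$ with $(x,y) \in s$, and noting that for each such $y$ the number of admissible intermediate points $z$ is exactly $a_{uvs}$ while there are $n_s$ endpoints $y$ with $(x,y)\in s$, gives $N_x = \sum_{s \in S} a_{uvs}\,n_s$. Equating the two expressions yields the identity.

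For part (i) I would count the number $N$ of triples $(x,z,y) \in X^3$ satisfying simultaneously $(x,z) \in u$, $(z,y) \in w$, and $(x,y) \in v$, organizing the count around each of the three distinguished pairs in turn. Fixing the pair $(x,y) \in v$ (there are $|X|\,n_v$ of them) and counting the intermediate points $z$ gives $N = |X|\,n_v\,a_{uwv}$ straight from the definition of the structure constants. Fixing instead the pair $(z,y) \in w$ and counting the points $x$ with $x \in zu^\ast \cap yv^\ast$ gives $N = |X|\,n_w\,a_{u^\ast v w}$; fixing the pair $(x,z) \in u$ and counting the points $y$ with $y \in xv \cap zw$ gives $N = |X|\,n_u\,a_{v w^\ast u}$. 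Cancelling the common factor $|X|$ produces the desired chain of equalities.

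The arithmetic here is routine; the genuinely delicate step is the translation in each case of a ``fix a pair, count the third point'' operation into the correct structure constant. This amounts to matching an intersection of the form $|\alpha s \cap \beta t^\ast|$ against the defining expression for $a_{stu'}$, which forces the exact placement of the asterisks and the order of the subscripts (for instance, recognizing that the constraint $(x,z)\in u$ must be rewritten as $x \in zu^\ast$ before it can be read off as a structure constant). Keeping this bookkeeping consistent across the three counts is the main obstacle, and once it is handled correctly both identities fall out immediately.
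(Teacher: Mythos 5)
Your argument is correct: the preliminary identification $n_r=|xr|$ via $a_{rr^\ast 1_X}$, the single-basepoint count for (ii), and the three-way count of triples $(x,z,y)$ with $(x,z)\in u$, $(z,y)\in w$, $(x,y)\in v$ for (i) all check out, including the careful placement of the asterisks when translating $|zu^\ast\cap yv^\ast|$ and $|xv\cap zw|$ into $a_{u^\ast vw}$ and $a_{vw^\ast u}$. The paper gives no proof of this lemma (it only cites Zieschang), and your double-counting derivation is precisely the standard one found there, so there is nothing to reconcile.
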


\begin{lem}[See \cite{mp1}]\label{lem:rel}
Let $(X,S)$ be an association scheme and $r, s \in S \setminus \{1_X\}$.
Then $rr^* \cap ss^* = \{ 1_X \}$ if and only if $a_{r^*st} \leq 1$ for all $t \in S$.
\end{lem}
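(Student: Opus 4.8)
The plan is to prove the equivalence by a single double-counting identity rather than treating the two implications separately. I would consider the set of quadruples
\[N = \#\{(z,z',x,y) : z \ne z',\ (z,x),(z',x)\in r,\ (z,y),(z',y)\in s\},\]
that is, ordered pairs of distinct points $z,z'$ together with a common $r$-successor $x$ and a common $s$-successor $y$. The idea is to count $N$ in two ways, using condition (iii) in the definition of a scheme, which guarantees that all the relevant intersection sizes depend only on the relation class of the pair involved.

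First, grouping the quadruples by $(x,y)$: for fixed $x,y$ the points $z$ with $(z,x)\in r$ and $(z,y)\in s$ are exactly those in $xr^\ast \cap ys^\ast$, whose number is $a_{r^\ast st}$ when $(x,y)\in t$. Hence the number of ordered pairs of distinct such points is $a_{r^\ast st}(a_{r^\ast st}-1)$, and summing over the $|X|\,n_t$ pairs in each class $t$ gives $N = |X|\sum_{t\in S} n_t\, a_{r^\ast st}(a_{r^\ast st}-1)$. Second, grouping by $(z,z')$: for $(z,z')$ in a class $u\ne 1_X$ the common $r$-successors number $a_{rr^\ast u}$ and the common $s$-successors number $a_{ss^\ast u}$, so $N = |X|\sum_{u\ne 1_X} n_u\, a_{rr^\ast u}\,a_{ss^\ast u}$. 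Equating the two expressions yields the key identity
\[\sum_{t\in S} n_t\, a_{r^\ast st}(a_{r^\ast st}-1) = \sum_{u\ne 1_X} n_u\, a_{rr^\ast u}\,a_{ss^\ast u}.\]

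Both sides are sums of nonnegative terms, so the conclusion follows by inspecting when each vanishes. The left-hand side is zero if and only if $a_{r^\ast st}(a_{r^\ast st}-1)=0$ for every $t$, i.e. $a_{r^\ast st}\le 1$ for all $t\in S$. The right-hand side is zero if and only if $a_{rr^\ast u}a_{ss^\ast u}=0$ for every $u\ne 1_X$; since $a_{rr^\ast u}\ne 0$ means precisely $u\in rr^\ast$ and likewise for $ss^\ast$, this says $rr^\ast\cap ss^\ast$ contains no $u\ne 1_X$, that is $rr^\ast\cap ss^\ast=\{1_X\}$ (note $1_X$ always lies in both complex products, as $a_{rr^\ast 1_X}=n_r\ge 1$). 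Equality of the two sides then gives the desired equivalence.

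The only delicate point, and the step I would write out most carefully, is the identification of the counting quantities with the structure constants: one must unwind $xr^\ast=\{z:(z,x)\in r\}$ and $ys^\ast=\{z:(z,y)\in s\}$ and invoke condition (iii) to see that $|xr^\ast\cap ys^\ast|$, $|zr\cap z'r|$ and $|zs\cap z's|$ are constant on each relation class and equal to $a_{r^\ast st}$, $a_{rr^\ast u}$ and $a_{ss^\ast u}$ respectively. Once these identifications are in place the argument is purely formal; the hypotheses $r,s\ne 1_X$ are not essential to the counting but keep the statement in the nondegenerate range in which it is applied.
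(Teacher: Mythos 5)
Your argument is correct. Note that the paper itself gives no proof of this lemma --- it is quoted from Muzychuk--Ponomarenko \cite{mp1} --- so there is nothing internal to compare against; what you have supplied is a valid self-contained proof. Your double count of the quadruples $(z,z',x,y)$ is the combinatorial form of the standard algebraic identity one gets by computing $\mathrm{tr}\bigl(A_{r}A_{r^\ast}A_{s}A_{s^\ast}\bigr)$ in two ways (equivalently, the squared Frobenius norm of $A_{r^\ast}A_{s}$), namely
\[
\sum_{t\in S} n_t\,a_{r^\ast s t}^{\,2} \;=\; \sum_{u\in S} n_u\,a_{rr^\ast u}\,a_{ss^\ast u},
\]
from which your identity follows after subtracting $\sum_t n_t a_{r^\ast s t} = n_{r}n_{s} = n_u a_{rr^\ast 1_X}a_{ss^\ast 1_X}\cdot\frac{n_s}{n_r}\big|_{u=1_X}$-type boundary terms; your direct count of \emph{distinct} pairs $z\neq z'$ cleanly packages that subtraction. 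The identifications $|xr^\ast\cap ys^\ast|=a_{r^\ast s t}$ for $(x,y)\in t$ and $|zr\cap z'r|=a_{rr^\ast u}$ for $(z,z')\in u$ are exactly instances of condition (iii) in the paper's definition, and your observation that $1_X$ always lies in both $rr^\ast$ and $ss^\ast$ correctly converts ``$rr^\ast\cap ss^\ast\subseteq\{1_X\}$'' into the stated equality. You are also right that the hypothesis $r,s\neq 1_X$ is not needed for the identity itself. No gaps.
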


For a binary relation $t \subseteq X \times X$ and subsets $E, F \subseteq X$, we define the adjacency matrix $A_t$ of $t \cap (E \times F)$
whose rows and columns are indexed by the elements of $E$ and $F$ as follows:
\begin{equation*}\label{product}
(A_t)_{xy} = \left\{
                      \begin{array}{ll}
                      1 & \hbox{if $(x,y) \in t$;} \\
                      0 & \hbox{otherwise.}
                      \end{array}
                     \right.
\end{equation*}
If both of $E$ and $F$ are $X$, then we usually write $A_t$ without mentioning $t \cap (E \times F)$.

\begin{lem}\label{lem:intersection}
Let $(X,S)$ be an association scheme.
Fix $x \in X$. For $d, e, f \in S$,
each column of the adjacency matrix of $e \cap (xd \times xf)$ has $a_{def}$ $1$'s.
\end{lem}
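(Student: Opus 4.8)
The plan is to unwind the definition of the intersection number $a_{def}$ and match it against the count of $1$'s in a single column. First I would fix a column, which corresponds to choosing a point $\gamma \in xf$; by the definition of $xf$ this means $(x, \gamma) \in f$. The rows of the matrix are indexed by the points $\alpha \in xd$, and by construction the $(\alpha, \gamma)$-entry equals $1$ precisely when $(\alpha, \gamma) \in e$. Hence the number of $1$'s in the column indexed by $\gamma$ is exactly the cardinality of the set $\{ \alpha \in xd \mid (\alpha, \gamma) \in e \}$.

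Next I would rewrite this set intrinsically so that it matches the form appearing in the definition of $a_{def}$. By the definition of $e^*$, the condition $(\alpha, \gamma) \in e$ is equivalent to $(\gamma, \alpha) \in e^*$, that is, to $\alpha \in \gamma e^*$. Consequently the set counted above is precisely $xd \cap \gamma e^*$, and its size is $|xd \cap \gamma e^*|$. Invoking the defining property of the intersection numbers of $(X,S)$ with $s = d$, $t = e$, $u = f$, and using that $(x, \gamma) \in f$, I get $|xd \cap \gamma e^*| = a_{def}$. Since $a_{def}$ is a constant that does not depend on the particular point $\gamma \in xf$, the same count holds for every column, which is exactly the assertion of the lemma.

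This argument is essentially a direct translation of the defining axioms of an association scheme, so no serious obstacle arises. The only point requiring care is bookkeeping with the transpose: the column count naturally produces the relation $e^*$ inside the intersection, and one must match it against the convention $a_{def} = |xd \cap \gamma e^*|$ (valid for $(x,\gamma) \in f$) rather than against $|xd \cap \gamma e|$. Keeping this distinction straight is what makes the identity come out with $a_{def}$ on the nose.
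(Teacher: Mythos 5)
Your argument is correct and is essentially identical to the paper's own proof: both fix a column indexed by a point of $xf$, identify the count of $1$'s in that column with $|xd \cap \gamma e^{*}|$, and then apply the constancy axiom defining $a_{def}$. No gaps.
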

\begin{proof}
Let $A_e$ be the adjacency matrix of $e \cap (xd \times xf)$.
Fix $y \in xf$.
We count the number of $x'$ such that $(A_e)_{x'y}=1$.
Then it amounts to $|xd \cap ye^\ast|$.
By the definition of association schemes, we have $|xd \cap ye^\ast| = a_{def}$.
\end{proof}

\begin{lem}\label{lem:inter-relation}
Let $(X,S)$ be an association scheme.
Fix $x \in X$. For $d, f \in S$, we have $\{s \in S \mid s \cap (xd \times xf) \neq \emptyset \} = \{ s \in S \mid s \in d^\ast f\}$.
\end{lem}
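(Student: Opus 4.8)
The plan is to turn the geometric condition ``$s\cap(xd\times xf)\neq\emptyset$'' into the vanishing or nonvanishing of a single structure constant, and then to identify that structure constant with the one defining the complex product $d^\ast f$. First I would apply Lemma~\ref{lem:intersection} with $e=s$: the adjacency matrix of $s\cap(xd\times xf)$, whose columns are indexed by $xf$, carries exactly $a_{dsf}$ ones in each column. Since $f$ is a nonempty relation and $|xf|=n_f$ is independent of $x$, the set $xf$ is nonempty, so the matrix is nonzero if and only if $a_{dsf}\neq 0$. Thus $s\cap(xd\times xf)\neq\emptyset$ exactly when $a_{dsf}\neq 0$; in fact $|s\cap(xd\times xf)|=n_f\,a_{dsf}$, which is notably independent of the chosen base point $x$.

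Next I would pass from $a_{dsf}$ to the structure constant occurring in the complex product. Applying Lemma~\ref{lem:constant}(i) with $u=d$, $w=s$, $v=f$ gives the identity $a_{dsf}\,n_f=a_{d^\ast f s}\,n_s$. As all valencies are strictly positive, this forces $a_{dsf}\neq 0$ if and only if $a_{d^\ast f s}\neq 0$, and by the definition of the complex product the latter is precisely the statement $s\in d^\ast f$. Concatenating the equivalences yields
\[ s\cap(xd\times xf)\neq\emptyset \iff a_{dsf}\neq 0 \iff a_{d^\ast f s}\neq 0 \iff s\in d^\ast f, \]
which is the asserted equality of the two subsets of $S$.

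The inclusion ``$\subseteq$'' is in fact transparent by hand: if $(u,v)\in s$ with $u\in xd$ and $v\in xf$, then $(u,x)\in d^\ast$ and $(x,v)\in f$, so the base point $x$ itself lies in $u d^\ast\cap v f^\ast$ and witnesses $a_{d^\ast f s}\geq 1$. The subtle direction, and the step I expect to be the main obstacle, is ``$\supseteq$'': a bare witness for $a_{d^\ast f s}\neq 0$ produces an intermediate point for \emph{some} pair lying in $s$, but not necessarily one adapted to our fixed $x$. This is exactly why I route the argument through the counting of Lemma~\ref{lem:intersection}, whose output $n_f\,a_{dsf}$ is manifestly the same for every $x$; positivity of this count then delivers a pair in $s\cap(xd\times xf)$ for the prescribed base point.
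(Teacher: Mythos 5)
Your proof is correct and follows essentially the same route as the paper: Lemma~\ref{lem:intersection} converts nonemptiness of $s\cap(xd\times xf)$ into $a_{dsf}\neq 0$, and Lemma~\ref{lem:constant}(i) converts that into $a_{d^\ast fs}\neq 0$, i.e.\ $s\in d^\ast f$. The extra observations (the exact count $n_f\,a_{dsf}$ and the direct argument for one inclusion) are accurate but not needed beyond what the paper does.
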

\begin{proof}
By Lemma \ref{lem:intersection}, we have $s \cap (xd \times xf) \neq \emptyset \Leftrightarrow a_{dsf} \neq 0$.
By Lemma \ref{lem:constant} (i), we have $a_{dsf} n_f = a_{d^\ast fs} n_s$.
Then $a_{dsf} \neq 0 \Leftrightarrow a_{d^\ast fs} \neq 0$.
This completes the proof.
\end{proof}

\begin{thm}[See \cite{kim2011}]\label{kim11}
Let $(X,S)$ be a non-commutative $p$-scheme of order $p^3$ which is not thin.
Then $\Or(S)$ is a thin closed subset of $S$ isomorphic to $C_p \times C_p$.
\end{thm}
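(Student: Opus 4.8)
The plan is to analyze the thin residue $\Or(S)$ through the factorization it induces and then to pin down its isomorphism type using non-commutativity. First I would record the standing facts: $\Or(S)$ is a closed subset, the quotient $S\Qd\Or(S)$ is thin (a standard property of the thin residue), and $n_{\Or(S)}\,n_{S\Qd\Or(S)}=n_S=p^3$. Since $S$ is not thin we have $\Or(S)\neq\{1_X\}$ (otherwise $S=S\Qd\Or(S)$ would be thin), so $n_{\Or(S)}\in\{p,p^2,p^3\}$; and because a thin scheme of order dividing $p^2$ is a $p$-group of that order, hence abelian, the quotient is automatically commutative in every surviving case. Reducing $n_S=\sum_s n_s$ modulo $p$ and using that every non-thin valency is divisible by $p$ gives $n_{\Ot(S)}\equiv 0\pmod p$, so the thin radical is nontrivial and, being a proper closed $p$-subgroup, satisfies $n_{\Ot(S)}\in\{p,p^2\}$.

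The heart of the argument is to show that $\Or(S)$ is thin of rank two. I would first prove that every element has valency $1$ or $p$: a hypothetical $s$ with $n_s=p^2$ generates $S$ as a closed subset, since $n_{\langle s\rangle}\geq 1+p^2$ divides $p^3$ and hence equals $p^3$; a structure-constant count based on $n_s^2=\sum_{t\in s^\ast s}a_{s^\ast s t}\,n_t$ (Lemma \ref{lem:constant}) together with Lemma \ref{lem:rel} then collapses the scheme to a thin or to a commutative one, both excluded. With all valencies in $\{1,p\}$, I would show that for each non-thin $s$ the set $s^\ast s$ is a thin closed subset isomorphic to $C_p$: the identity contributes $a_{s^\ast s 1_X}=n_s=p$, the remaining mass $n_s^2-n_s=p(p-1)$ is spread over $s^\ast s\setminus\{1_X\}$, and Lemma \ref{lem:rel} forces these relations to have valency $1$ and to close up into a group of order $p$. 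Since $\Or(S)=\langle\bigcup_s s^\ast s\rangle$ is then generated by subgroups of order $p$, it is thin and elementary abelian; in particular $\Or(S)\subseteq\Ot(S)$, whence $n_{\Or(S)}\leq n_{\Ot(S)}\leq p^2$ and the case $n_{\Or(S)}=p^3$ is eliminated.

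It remains to exclude $n_{\Or(S)}=p$, and this is exactly where non-commutativity enters and where I expect the main obstacle to lie. Suppose $\Or(S)\cong C_p$. Being thin of order $p$ and strongly normal, $\Or(S)$ is normalized by $S$, and the conjugation action of the thin radical on it factors through $\autg(C_p)$, a group of order $p-1$ coprime to $p$; since $S$ is a $p$-scheme the action is forced to be trivial, so $\Or(S)$ is central. I would then show that a $p$-scheme of order $p^3$ with central thin residue of order $p$ and commutative thin quotient $S\Qd\Or(S)$ is itself commutative, contradicting the hypothesis. Granting this, $n_{\Or(S)}=p^2$, and combined with the elementary-abelian conclusion above we obtain $\Or(S)\cong C_p\times C_p$.

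The two steps I would treat with the most care are the following. First, proving that $s^\ast s$ is genuinely a thin $C_p$, rather than a closed subset carrying non-thin relations, is where the $p$-valency hypothesis and Lemma \ref{lem:rel} must be used quantitatively. Second, the final commutativity implication for a central order-$p$ thin residue is delicate: the naive group-theoretic analogue (central thin residue plus abelian quotient) yields only nilpotency class $\leq 2$, so to force the adjacency matrices to commute one has to exploit that the quotient has order exactly $p^2$ and that every non-trivial valency equals $p$. This last point is the crux of the whole argument.
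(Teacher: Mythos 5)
The first thing to note is that the paper does not actually prove Theorem \ref{kim11}: it is imported verbatim from \cite{kim2011}, so there is no in-paper argument to compare yours against. Judged on its own terms, your outline has the right overall shape (force all valencies into $\{1,p\}$, show each $s^\ast s$ is a thin group of order $p$, conclude $\Or(S)$ is thin and elementary abelian, then use non-commutativity to exclude rank one), but the central step is not established by the computation you give, and the intermediate statement you extract from it is false without non-commutativity. Concretely, you claim that for a non-thin $s$ the identity $n_s^2=\sum_{t\in s^\ast s}a_{s^\ast s t}n_t$ together with $a_{s^\ast s 1_X}=p$ ``forces'' the remaining relations in $s^\ast s$ to be thin. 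It does not: the remaining mass $p^2-p$ is perfectly consistent with $s^\ast s=\{1_X,t\}$ where $n_t=p$ and $a_{s^\ast s t}=p-1$, and Lemma \ref{lem:rel} (which characterizes the condition $rr^\ast\cap ss^\ast=\{1_X\}$) gives no purchase here. Worse, there exist commutative $p$-schemes of order $p^3$ with all valencies in $\{1,p\}$ in which $s^\ast s$ contains a relation of valency $p$: take the transitivity module of $\langle\phi\rangle$ acting on $C_p^3=\langle a,b,c\rangle$ with $\phi\colon a\mapsto a$, $b\mapsto ab$, $c\mapsto bc$; the orbit $T$ of $c$ satisfies that $T^{-1}T$ contains the full orbit of $b$, which is a basic set of size $p$. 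Hence thinness of $s^\ast s$ genuinely requires the non-commutativity hypothesis, which in your plan enters only at the very end, to rule out $n_{\Or(S)}=p$.

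Beyond that, the two steps you yourself flag as delicate are asserted rather than proved: the exclusion of an element of valency $p^2$ is dispatched with ``a structure-constant count \dots collapses the scheme,'' and the final implication that a central thin residue of order $p$ with thin quotient forces commutativity is introduced with ``granting this.'' That last implication is where the theorem actually lives (as you note, the naive group-theoretic analogue gives only class $\leq 2$), so what you have is a programme rather than a proof; completing it would amount to reconstructing the case analysis of \cite{kim2011}.
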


\subsection{S-rings}\label{subsec:schur}

Let $\mathcal{A}$ be a S-ring over $H$.
We say that a subgroup $K$ of $H$ is an \textit{$\mathcal{A}$-subgroup} if $\underline{K} \in \mathcal{A}$.
For each $\mathcal{A}$-subgroup $E$ of $H$, one can define a subring $\mathcal{A}_E$ by setting
\[\mathcal{A}_E = \mathcal{A} \cap \mathbb{Q}E.\]
It is easy to see that $\mathcal{A}_E$ is a S-ring over $E$
and \[\mathrm{Bsets}(\mathcal{A}_E) = \{ T \mid T \in \mathrm{Bsets}(\mathcal{A}), T \subseteq E \}.\]

An automorphism of $\mathcal{C}(\mathcal{A})$ taking $1_H$ to $1_H$ is called the \textit{automorphism} of $\mathcal{A}$,
which is denoted by $\mathrm{Aut}(\mathcal{A})$.
One can see that
\[\mathrm{Aut}(\mathcal{A}) = \mathrm{Aut}(H, R_H(\mathrm{Bsets}(\mathcal{A})))_{1_H},\]
\[\mathrm{Aut}(H, R_H(\mathrm{Bsets}(\mathcal{A}))) = \mathrm{Aut}(\mathcal{A}) H_R,\]
where $H_R$ is the right regular representation of $H$.

\begin{lem}\label{lem:con-number}
Let $\mathcal{A}$ be a S-ing over $H$. Let $T_1, T_2$ be elements of $\mathrm{Bsets}(\mathcal{A})$.
Then the following are equivalent.
\begin{enumerate}
\item $\underline{T_1} \cdot \underline{T_2} = \sum_{T \in \mathrm{Bsets}(\mathcal{A})} a_{R_H(T_1) R_H(T_2) R_H(T)} \underline{T}$.
\item $R_H(T_1) R_H(T_2) = \sum_{T \in \mathrm{Bsets}(\mathcal{A})} a_{R_H(T_1) R_H(T_2) R_H(T)} R_H(T)$.
\end{enumerate}
\end{lem}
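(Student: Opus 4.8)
The plan is to read both identities as two faces of a single fact: the linear bijection $\underline{T}\mapsto R_H(T)$, which sends the distinguished basis of $\mathcal{A}$ to the basis relations of the Cayley scheme $\mathcal{C}(\mathcal{A})$, transports the group-algebra product to the relational product of the scheme. By Theorem~\ref{mk} this correspondence is already an order-preserving bijection between $\mathcal{A}$ and $\mathcal{C}(\mathcal{A})$; what the lemma records is its compatibility with multiplication at the level of a single product. Since (i) and (ii) carry the \emph{same} candidate coefficients $a_{R_H(T_1)R_H(T_2)R_H(T)}$, it suffices to show that in each of the two algebras the coefficient of the basis element indexed by $T$ in the relevant product equals this intersection number; the equivalence is then immediate, because under the basis identification the two statements become the image of one another.

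First I would compute the left-hand side of (i) directly in $\mathbb{Q}H$. By definition $\underline{T_1}\cdot\underline{T_2}=\sum_{t_1\in T_1,\,t_2\in T_2}t_1t_2$, so the coefficient of a fixed $g\in H$ is the number of ways of writing $g$ as a product of an element of $T_1$ and an element of $T_2$. Because $\mathcal{A}$ is an S-ring we have $\underline{T_1}\cdot\underline{T_2}\in\mathcal{A}$, so this coefficient is constant as $g$ runs over any single $T\in\mathrm{Bsets}(\mathcal{A})$; denote the common value by $c_T$. Statement (i) is then precisely the family of scalar equalities $c_T=a_{R_H(T_1)R_H(T_2)R_H(T)}$ for all $T$.

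Next I would evaluate the intersection number from its definition at the base point $x=1_H$. Fixing $g\in T$ gives $(1_H,g)\in R_H(T)$, and using $1_H\,R_H(T_1)=T_1$ together with $R_H(T_2)^{*}=R_H(T_2^{-1})$, so that $g\,R_H(T_2)^{*}=T_2^{-1}g$, the formula $a_{stu}=|xs\cap yt^{*}|$ yields $a_{R_H(T_1)R_H(T_2)R_H(T)}=|T_1\cap T_2^{-1}g|$, which is again a count of factorizations of $g$ through $T_1$ and $T_2$ in the order dictated by the right-regular convention. Matching this with $c_T$ gives (i). For (ii) I would invoke the counting behind Lemma~\ref{lem:intersection}, namely that the coefficient of $R_H(T)$ in the product $R_H(T_1)R_H(T_2)$, read as the number of paths through a middle point, is exactly $a_{R_H(T_1)R_H(T_2)R_H(T)}$. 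Since the two families of coefficients coincide, (i) holds if and only if (ii) holds.

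The step I expect to be the main obstacle is the bookkeeping in the previous paragraph. The relation $R_H(T)=\{(h,th)\}$ encodes \emph{left} translation, so under the right-regular convention relational composition multiplies group elements in the opposite order to the group-algebra product. The delicate point is therefore to track the transpose in $R_H(T_2)^{*}=R_H(T_2^{-1})$ and the placement of $g$ so that the factorization count produced by the intersection number lines up with the one produced by $\underline{T_1}\cdot\underline{T_2}$, rather than with its transpose. Once this compatibility of the two products under the correspondence $\underline{T}\leftrightarrow R_H(T)$ is checked, the equivalence of (i) and (ii) follows formally.
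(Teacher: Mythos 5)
The paper states this lemma without proof, so there is nothing on its side to compare against; what matters is whether your argument closes, and it does not. The gap sits exactly at the point you flag as ``the main obstacle'' and then defer. Carrying out the computation you set up: with $x=1_H$ and $g\in T$ one gets $1_H R_H(T_1)=T_1$ and $gR_H(T_2)^{*}=T_2^{-1}g$, so $a_{R_H(T_1)R_H(T_2)R_H(T)}=|T_1\cap T_2^{-1}g|$, which counts pairs $(t_1,t_2)\in T_1\times T_2$ with $t_1=t_2^{-1}g$, i.e.\ factorizations $g=t_2t_1$. By contrast, your $c_T$ is $|T_1\cap gT_2^{-1}|$, counting factorizations $g=t_1t_2$. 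These two counts are the structure constants of \emph{opposite} products: since $R_H(T)=\{(h,th)\}$ is the right-invariant relation, the map $\underline{T}\mapsto A_{R_H(T)}$ extends the left regular representation and is an anti-isomorphism, so $a_{R_H(T_1)R_H(T_2)R_H(T)}$ is the coefficient of $\underline{T}$ in $\underline{T_2}\cdot\underline{T_1}$, not in $\underline{T_1}\cdot\underline{T_2}$. For singletons $T_1=\{a\}$, $T_2=\{b\}$ with $ab\neq ba$, the right-hand side of (i) evaluates to $ba$ while the left-hand side is $ab$, so the matching you assert genuinely fails.

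Consequently your overall strategy --- prove (i) and (ii) separately as unconditional identities and deduce the equivalence trivially --- cannot succeed. Statement (ii), read as the identity $A_{R_H(T_1)}A_{R_H(T_2)}=\sum_{T}a_{R_H(T_1)R_H(T_2)R_H(T)}A_{R_H(T)}$ in the adjacency algebra, is indeed always true by the path-counting behind Lemma \ref{lem:intersection}; but (i) as written holds precisely when $\underline{T_1}$ and $\underline{T_2}$ commute, so the two sides of the purported equivalence do not have the same truth value in general. The honest output of your computation is the unconditional identity $\underline{T_2}\cdot\underline{T_1}=\sum_{T}a_{R_H(T_1)R_H(T_2)R_H(T)}\,\underline{T}$. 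You should state that, and then either record the order reversal explicitly (the correspondence of Theorem \ref{mk} transports the product to the opposite product under these conventions) or explain in what reinterpreted sense the lemma's (i) and (ii) are meant to be equivalent; as it stands, ``once this compatibility is checked'' papers over the one step that is actually false.
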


Based on Theorem \ref{mk}, we state the following results in \cite{zies}.
\begin{prop}\label{prop:3}
Let $\mathcal{A}$ be a S-ring over $H$ and $m$ an element of $H$.
If $T, \{m \} \in \mathrm{Bsets}(\mathcal{A})$, then $Tm = \{ tm \mid t \in T \}$ lies in $\mathrm{Bsets}(\mathcal{A})$.
\end{prop}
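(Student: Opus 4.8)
The plan is to work entirely inside the group algebra $\mathbb{Q}H$ and to exploit the single property of $\mathcal{A}$ that we have not yet used: closure under multiplication. Since $\{m\}\in\mathrm{Bsets}(\mathcal{A})$ is a singleton we have $\underline{\{m\}}=m$, and since $T\in\mathrm{Bsets}(\mathcal{A})$ we have $\underline{T}\in\mathcal{A}$. Hence the product $\underline{T}\cdot m=\sum_{t\in T}tm=\underline{Tm}$ lies in $\mathcal{A}$, because $\mathcal{A}$ is a subalgebra. The initial observation that makes everything work is that right translation by $m$ is a bijection of $H$, so the elements $tm$ $(t\in T)$ are pairwise distinct and $\underline{Tm}$ is a genuine $\{0,1\}$-valued vector: every element of $H$ occurs in it with coefficient $0$ or $1$.

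First I would isolate the following lemma, which I intend to use twice: if $S\in\mathrm{Bsets}(\mathcal{A})$ and $\{g\}\in\mathrm{Bsets}(\mathcal{A})$, then $Sg$ is a union of basic sets. Indeed $\underline{Sg}=\underline{S}\cdot g\in\mathcal{A}$ is a $\{0,1\}$-valued vector; expanding it in the basis $\{\underline{T_i}\}$ and comparing coefficients shows that, because the indicator vector of each $T_i$ is constant on $T_i$, each coefficient $c_i$ of $\underline{T_i}$ must itself be $0$ or $1$. Therefore $Sg=\bigcup_{i:\,c_i=1}T_i$ is a union of basic sets.

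Applying this lemma with $g=m$ gives $Tm=\bigsqcup_{i\in I}T_i$ for some index set $I$. The main obstacle is precisely to rule out $|I|>1$, that is, to upgrade ``union of basic sets'' to ``single basic set''; a cardinality count alone will not do it, since several small basic sets could have sizes summing to $|T|$. To overcome this I would invoke condition (iii) in the definition of a Schur ring, which guarantees $\{m^{-1}\}=\{m\}^{-1}\in\mathrm{Bsets}(\mathcal{A})$. Applying the lemma with $g=m^{-1}$ to each $T_i$ $(i\in I)$ shows that every $T_i m^{-1}$ is a nonempty union of basic sets. But $\bigsqcup_{i\in I}T_i m^{-1}=(Tm)m^{-1}=T$, and $T$ is a single basic set; since distinct basic sets are disjoint, a basic set cannot be written as a disjoint union of two or more nonempty unions of basic sets. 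Hence $|I|=1$, so $Tm=T_i\in\mathrm{Bsets}(\mathcal{A})$, which is exactly what we want. (Phrased more structurally, the lemma applied to $m$ and to $m^{-1}$ shows that right translation by $m$ is an order-preserving bijection, with order-preserving inverse, on the lattice of unions of basic sets, and therefore carries atoms to atoms; the atoms are precisely the basic sets.)
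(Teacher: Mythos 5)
Your argument is correct. Note that the paper itself gives no proof of this proposition: it is quoted (together with Proposition~\ref{prop:st}) from \cite{zies} via the S-ring/Cayley-scheme dictionary of Theorem~\ref{mk}, where the usual argument is scheme-theoretic: $\{m\}$ corresponds to a thin relation $v$ with $n_v=1$, and Lemma~\ref{lem:constant}(ii) gives $n_u = n_u n_v = \sum_s a_{uvs}n_s$, which together with the fact that $uv$ covers $Tm$ forces the complex product $uv$ to be a single relation of valency $n_u$. Your route stays entirely inside $\mathbb{Q}H$ and is self-contained: the observation that $\underline{T}\cdot m=\underline{Tm}$ is a $\{0,1\}$-vector in $\mathcal{A}$, hence a union of basic sets, is exactly right, and you correctly identify that the only real issue is ruling out a union of two or more basic sets. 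Your resolution via condition (iii) of the definition --- translating back by $m^{-1}$, which is again a singleton basic set, and using that the basic sets partition $H$ so that the basic set $T$ cannot properly contain another basic set --- is valid; in effect you show right translation by $m$ permutes the basic sets, which is slightly more than the statement asks but costs nothing. The only thing I would make explicit is that the sets $T_im^{-1}$, $i\in I$, are pairwise disjoint (immediate since translation is a bijection), so that distinct $i$ contribute distinct basic sets inside $T$; you use this implicitly in the final count.
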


\begin{prop}\label{prop:st}
Let $\mathcal{A}$ be a S-ring over $H$. If $T \in \mathrm{Bsets}(\mathcal{A})$,
then the stabilizer $\mathrm{St}_R(T):=\{ h \in H \mid Th=T \}$ is an $\mathcal{A}$-subgroup of $H$.
\end{prop}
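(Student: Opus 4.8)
The plan is to establish the two requirements hidden in the phrase ``$\mathcal{A}$-subgroup'': that $\mathrm{St}_R(T)$ is a subgroup of $H$, and that $\underline{\mathrm{St}_R(T)}$ lies in $\mathcal{A}$. The first is routine: if $Th = T$ and $Th' = T$ then $T(hh') = T$, and multiplying $Th = T$ on the right by $h^{-1}$ gives $Th^{-1} = T$, so $\mathrm{St}_R(T)$ is closed under products and inverses and contains $1_H$. The substance of the statement is therefore the membership $\underline{\mathrm{St}_R(T)} \in \mathcal{A}$, and for this I would detect the stabilizer through the structure constants of a suitable product in $\mathcal{A}$.

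Since $T \in \mathrm{Bsets}(\mathcal{A})$ we also have $T^{-1} \in \mathrm{Bsets}(\mathcal{A})$ by axiom (iii), so both $\underline{T}$ and $\underline{T^{-1}}$ belong to $\mathcal{A}$ and hence so does the product $\underline{T^{-1}} \cdot \underline{T}$. The key computation is to read off its coefficients: the coefficient of an element $g \in H$ in $\underline{T^{-1}} \cdot \underline{T}$ equals $|\{ (s,t) \in T \times T \mid s^{-1}t = g \}| = |T \cap Tg^{-1}|$. I would then observe that $|T \cap Tg^{-1}| = |T|$ forces $Tg^{-1} = T$ (two finite sets of equal cardinality), i.e. $g \in \mathrm{St}_R(T)$, while for every other $g$ the coefficient is strictly smaller. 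Thus $\mathrm{St}_R(T)$ is exactly the set of group elements on which this coefficient attains its maximal value $|T|$.

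To finish I would use that the coefficients of any element of $\mathcal{A}$ are constant on each basic set: writing $\underline{T^{-1}} \cdot \underline{T} = \sum_{T_i \in \mathrm{Bsets}(\mathcal{A})} c_i \underline{T_i}$, the coefficient of every $g \in T_i$ is precisely $c_i$. Consequently $\mathrm{St}_R(T)$ is the union of those basic sets $T_i$ with $c_i = |T|$, so $\underline{\mathrm{St}_R(T)} = \sum_{c_i = |T|} \underline{T_i}$ is a $\{0,1\}$-combination of the distinguished basis and lies in $\mathcal{A}$. The only delicate point I anticipate is the maximality argument, namely confirming that the coefficient equals $|T|$ on the stabilizer and is strictly below $|T|$ off it; but this reduces to the elementary cardinality fact that $|T \cap Tg^{-1}| = |T|$ if and only if $Tg^{-1} = T$.
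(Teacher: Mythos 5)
Your argument is correct: the coefficient of $g$ in $\underline{T^{-1}}\cdot\underline{T}$ is $|T\cap Tg^{-1}|$, which attains its maximum $|T|$ exactly on $\mathrm{St}_R(T)$, and since coefficients of elements of $\mathcal{A}$ are constant on basic sets, $\mathrm{St}_R(T)$ is a union of basic sets containing $1_H$. The paper states this proposition without proof (citing Zieschang), and your coefficient-maximality argument is precisely the standard proof given there, so there is nothing to add.
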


\begin{prop}\label{prop:pre1}
Let $\mathcal{A}$ be a $p$-S-ring over a group $H$ of order $p^m$. Then
\begin{enumerate}
\item the group $\Ot(\mathcal{A}):=\{ h \in H \mid \{ h \} \in \mathrm{Bsets}(\mathcal{A}) \}$ is a nontrivial $\mathcal{A}$-subgroup,
\item the group $\Or(\mathcal{A}):=\langle \{ T^{-1}T \mid T \in \mathrm{Bsets}(\mathcal{A}) \} \rangle$ is a proper $\mathcal{A}$-subgroup,
\item there exists a series $H_0 = \{1_H \} < H_1 < \cdots < H_m = H$ of $\mathcal{A}$-subgroups such that $[H_{i+1}:H_i]=p$.
\end{enumerate}
\end{prop}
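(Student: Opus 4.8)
The plan is to prove the three assertions in the order (ii), (i), (iii), since both (i) and (iii) will rest on (ii). Throughout I move freely between $\mathcal{A}$ and its Cayley scheme $\mathcal{C}(\mathcal{A})$ via Theorem \ref{mk}, and I use the recorded fact that for an $\mathcal{A}$-subgroup $E$ the restriction $\mathcal{A}_E$ is again a ($p$-)S-ring with $\mathrm{Bsets}(\mathcal{A}_E)=\{T\in\mathrm{Bsets}(\mathcal{A}): T\subseteq E\}$.

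For (ii), that $\Or(\mathcal{A})$ is a subgroup is immediate from its definition as a generated subgroup. To see it is an $\mathcal{A}$-subgroup I first note that for every $T\in\mathrm{Bsets}(\mathcal{A})$ the product $\underline{T^{-1}}\cdot\underline{T}$ lies in $\mathcal{A}$ and has support exactly $T^{-1}T$, so each $T^{-1}T$ is a union of basic sets. Since a product of two elements of $\mathcal{A}$ with nonnegative coefficients has support equal to the product of the two supports, the collection of subsets of $H$ that are unions of basic sets is closed under set multiplication; iterating, the subgroup $\Or(\mathcal{A})=\langle\bigcup_T T^{-1}T\rangle$ is itself a union of basic sets, whence $\underline{\Or(\mathcal{A})}\in\mathcal{A}$. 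The substantive point is properness. I would reduce it to the assertion that the thin quotient is nontrivial: since the thin residue is strongly normal, $\Or(\mathcal{A})\nml\mathcal{C}(\mathcal{A})$ (see \cite{zies}), it is a normal $\mathcal{A}$-subgroup and $\mathcal{A}/\!/\Or(\mathcal{A})$ is the group algebra of $H/\Or(\mathcal{A})$ (a thin S-ring); thus $\Or(\mathcal{A})$ is proper precisely when this quotient group is nontrivial. This is exactly where the $p$-valenced hypothesis enters, as every nonidentity basic set has size divisible by $p$, i.e. $\mathcal{C}(\mathcal{A})$ is a $p$-scheme, and the thin residue of a $p$-scheme is a proper closed subset. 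I expect this to be the main obstacle, and I would settle it via the mod-$p$ trace identity $\mathrm{tr}(A_T A_{T^{-1}})=|H|\,|T|\equiv 0\pmod p$, which forces the $\mathbb{F}_p$-adjacency algebra to be non-semisimple and hence the scheme to admit a nontrivial thin (group) quotient; equivalently, one exhibits a surjection $H\to C_p$ constant on every basic set.

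For (i), closure of $\Ot(\mathcal{A})$ under inverses is axiom (iii) of an S-ring, and closure under products is Proposition \ref{prop:3} (if $\{g\},\{h\}\in\mathrm{Bsets}(\mathcal{A})$ then $\{gh\}\in\mathrm{Bsets}(\mathcal{A})$), so $\Ot(\mathcal{A})$ is a subgroup; being a union of singleton basic sets, it is an $\mathcal{A}$-subgroup. Nontriviality I would obtain by induction on $m$ using (ii). If $\Or(\mathcal{A})=\{1_H\}$ then $T^{-1}T=\{1_H\}$ for every $T$, forcing every basic set to be a singleton, so $\Ot(\mathcal{A})=H\neq\{1_H\}$. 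Otherwise $\{1_H\}\subsetneq\Or(\mathcal{A})\subsetneq H$ by (ii); restricting to the $\mathcal{A}$-subgroup $\Or(\mathcal{A})$ gives a $p$-S-ring over a strictly smaller $p$-group, whose inductively nontrivial thin radical consists of singleton basic sets of $\mathcal{A}$ and hence lies in $\Ot(\mathcal{A})$.

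For (iii), I would argue top-down, again by induction on $m$. By (ii) together with the normality of $\Or(\mathcal{A})$, the quotient $\mathcal{A}/\!/\Or(\mathcal{A})$ is thin, so every subgroup of $H$ containing $\Or(\mathcal{A})$ is an $\mathcal{A}$-subgroup. Since $H/\Or(\mathcal{A})$ is a nontrivial $p$-group it has a subgroup of index $p$, whose preimage $H_{m-1}\supseteq\Or(\mathcal{A})$ is then an $\mathcal{A}$-subgroup of index $p$ in $H$. Restricting $\mathcal{A}$ to $H_{m-1}$ yields a $p$-S-ring over a group of order $p^{m-1}$, to which the inductive hypothesis supplies a chain $\{1_H\}=H_0<\cdots<H_{m-1}$ of $\mathcal{A}_{H_{m-1}}$-subgroups (equivalently $\mathcal{A}$-subgroups) with successive index $p$; appending $H_m=H$ completes the series.
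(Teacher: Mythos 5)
A preliminary remark on the comparison itself: the paper offers no proof of this proposition --- it is quoted from the literature (``Based on Theorem~\ref{mk}, we state the following results in \cite{zies}'') --- so your attempt can only be measured against the standard arguments. Your architecture is sound and essentially the standard one: the verification that $\Ot(\mathcal{A})$ and $\Or(\mathcal{A})$ are $\mathcal{A}$-subgroups (supports of products of nonnegative elements of $\mathcal{A}$ are unions of basic sets) is correct, and the reductions of (i) and (iii) to (ii) --- restricting to the strictly smaller $p$-S-ring $\mathcal{A}_{\Or(\mathcal{A})}$ for (i), and pulling back an index-$p$ subgroup through the thin quotient $H/\Or(\mathcal{A})$ and inducting for (iii) --- are both correct as written.

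The genuine gap sits exactly where you predicted it, in the properness of $\Or(\mathcal{A})$, and the mechanism you propose cannot be repaired. The trace identity $\mathrm{tr}(A_TA_{T^{-1}})=|H|\,|T|\equiv 0\pmod p$ uses only $p\mid|H|$ and nowhere the hypothesis that every $|T|$ is a power of $p$; hence it holds verbatim for arbitrary S-rings over $p$-groups, for which the conclusion is false. Concretely, let $H=C_3\times C_3$ and let $\mathcal{A}$ be the rank-two S-ring with $\mathrm{Bsets}(\mathcal{A})=\{\{1_H\},T\}$, $T=H\setminus\{1_H\}$: then $T^{-1}T=H$, so $\Or(\mathcal{A})=H$ and there is no nontrivial thin quotient, yet $A_T+I=J$ satisfies $J^2=9J=0$ over $\mathbb{F}_3$, so the modular adjacency algebra is non-semisimple (indeed local). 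This refutes both links in your chain: degeneracy of the trace form is automatic and proves nothing here, and ``non-semisimple $\mathbb{F}_p$-adjacency algebra'' does not imply ``nontrivial thin quotient.'' Any correct proof of (ii) must use the $p$-power valency condition in an essential way; that is the one piece of real content in the proposition, it is precisely what the cited sources supply (see also \cite{mpschur}), and it is the step your write-up leaves unproved --- with the consequence that (i) and (iii), which you correctly derive from (ii), are also left without foundation.
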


\begin{rem}
$\Ot(\mathcal{A})$ and $\Or(\mathcal{A})$ correpond to the thin radical and thin residue in the theory of association schemes, respectively.
\end{rem}

\section{Schurity of $p$-S-rings}\label{sec:main}
It is known that every $2$-S-ring over a group of order $8$ is commutative and Schurian (see \cite{hanakimi}).
From now on, we assume that $p$ is an odd prime.
It is well known that there are two non-abelian groups of order $p^3$ up to isomorphism, namely
\[ H_1 = \langle a, b \mid a^{p^2} = b^p = 1, ab = ba^{p+1} \rangle,\]
\[ H_2 = \langle a, b, c \mid a^p=b^p=c^p=1, [a,b]=c, [a,c]=[b,c]=1 \rangle . \]



Throughout this section, we assume that $\mathcal{A}$ is a non-commutative $p$-S-ring over $H_i$ $(i=1, 2)$ such that
\begin{equation}\label{A}
|T|= p  ~~\text{for each}~~ T \in \mathrm{Bsets}(\mathcal{A}) \setminus \mathrm{Bsets}(\mathcal{A}_{\Or(\mathcal{A})}),
\end{equation}
\begin{equation}\label{B}
| \{ \mathrm{St}_R(T) \mid T \in \mathrm{Bsets}(\mathcal{A}) \setminus \mathrm{Bsets}(\mathcal{A}_{\Or(\mathcal{A})}) \} | = p-1.
\end{equation}

By Theorem \ref{kim11}, we have
\[\Or(\mathcal{A}) = \Ot(\mathcal{A}) \cong C_p \times C_p.\]

Without loss of generality, we can assume $\Or(\mathcal{A}) = \langle a^p, b \rangle$ over $H_1$ and $\Or(\mathcal{A}) = \langle b, c \rangle$ over $H_2$.
For convenience, we omit the subindex $i$ of $H_i$ and denote $\Or(\mathcal{A})$ by $L$.

\begin{lem}\label{lem:center}
For each $T \in \mathrm{Bsets}(\mathcal{A}) \setminus \mathrm{Bsets}(\mathcal{A}_L)$,
$\mathrm{St}_R(T)$ is not the center $Z(H)$ of $H$.
\end{lem}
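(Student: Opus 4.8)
The plan is to exploit the right-translation action of $L = \Or(\mathcal{A})$ on the basic sets lying outside $\mathcal{A}_L$ and to read condition $(\ref{B})$ as an injectivity statement for a coset-to-stabilizer map. First I would record two elementary facts, valid for every $T \in \mathrm{Bsets}(\mathcal{A}) \setminus \mathrm{Bsets}(\mathcal{A}_L)$. Since $Th = T$ forces $t^{-1}(th) \in T^{-1}T$ for any $t \in T$, we have $\mathrm{St}_R(T) \leq T^{-1}T \subseteq \Or(\mathcal{A}) = L$; and since $T$ is invariant under right multiplication by $\mathrm{St}_R(T)$, hence a union of cosets $t\,\mathrm{St}_R(T)$, while $|T| = p$ by $(\ref{A})$, the order of $\mathrm{St}_R(T)$ divides $p$. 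When $\mathrm{St}_R(T)$ has order $p$ the set $T$ is exactly one coset $tK$ with $K = \mathrm{St}_R(T) \leq L$, so that $T^{-1} = Kt^{-1} = t^{-1}(tKt^{-1})$ and therefore $\mathrm{St}_R(T^{-1}) = tKt^{-1}$.

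Next I would make the coset-to-stabilizer assignment precise. As $L = \Ot(\mathcal{A})$, every $\ell \in L$ is a singleton basic set, so by Proposition $\ref{prop:3}$ the map $T \mapsto T\ell$ sends $\mathrm{Bsets}(\mathcal{A}) \setminus \mathrm{Bsets}(\mathcal{A}_L)$ to itself and preserves the coset $gL$ that contains $T$. Because $\mathrm{St}_R(T) \leq L$ is abelian, $\mathrm{St}_R(T\ell) = \ell^{-1}\mathrm{St}_R(T)\ell = \mathrm{St}_R(T)$, so an entire $L$-orbit shares one stabilizer, and that orbit has size $|L|/|\mathrm{St}_R(T)| = p^2/|\mathrm{St}_R(T)|$. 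Each nontrivial coset $gL$ contains exactly $p$ basic sets, so the orbit size is at most $p$; this forces $|\mathrm{St}_R(T)| = p$ and shows that the orbit is all of $gL$. Hence every nontrivial coset $gL$ carries a single stabilizer $K(gL) \leq L$ of order $p$. There are $p-1$ nontrivial cosets, and condition $(\ref{B})$ asserts that these stabilizers realize exactly $p-1$ distinct subgroups; therefore the assignment $gL \mapsto K(gL)$ is injective.

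Finally I would derive the contradiction. Suppose $\mathrm{St}_R(T) = Z(H)$ for some $T \subseteq gL$ with $g \notin L$. Since $Z(H)$ is central, the formula above gives $\mathrm{St}_R(T^{-1}) = t\,Z(H)\,t^{-1} = Z(H)$, while $T^{-1}$ lies in the coset $g^{-1}L$. As $p$ is odd and $gL$ has order $p$ in $H/L \cong C_p$, we have $gL \neq g^{-1}L$; thus $T$ and $T^{-1}$ sit in two distinct nontrivial cosets yet produce the same stabilizer $Z(H)$, contradicting the injectivity of $gL \mapsto K(gL)$. The main obstacle is the middle step: showing that the stabilizer is constant on each coset and has order exactly $p$, so that condition $(\ref{B})$ translates into injectivity of the coset-to-stabilizer map. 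Once that is established, the centrality of $Z(H)$ together with the oddness of $p$ closes the argument at once.
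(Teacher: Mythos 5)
Your proof is correct, but it takes a genuinely different route from the paper's. The paper's own argument never invokes condition (\ref{B}): assuming $\mathrm{St}_R(T)=Z(H)$, it writes $T=hZ(H)$, uses $\underline{T}\cdot\underline{T}=p\cdot\underline{h^2Z(H)}$ together with (\ref{A}) and translation by the singleton basic sets of $L$ to show that \emph{every} basic set outside $L$ is a coset of $Z(H)$, and then concludes from Proposition \ref{prop:pre1}(ii) that $\Or(\mathcal{A})=Z(H)$ has order $p$, contradicting $\Or(\mathcal{A})\cong C_p\times C_p$. You instead make (\ref{B}) do the work: you first establish that each $\mathrm{St}_R(T)$ lies in $L$, has order exactly $p$, and is constant on each coset $gL$ (via the orbit of the right $L$-translation action on basic sets), so that (\ref{B}) forces the coset-to-stabilizer assignment $gL\mapsto K(gL)$ to be injective on the $p-1$ nontrivial cosets; then the inversion $T\mapsto T^{-1}$, the centrality of $Z(H)$, and the oddness of $p$ produce two distinct cosets with the same stabilizer. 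Each version has its merits: the paper's is shorter and shows the conclusion already follows from (\ref{A}) alone, while yours isolates, as reusable by-products, precisely the structural facts (stabilizers of order $p$, constancy on $L$-cosets, bijectivity of the coset-to-stabilizer map) that underlie Lemma \ref{per:induce} and the role of (\ref{B}) in Theorem \ref{mainthm1}. All the individual steps you take (the divisibility $|\mathrm{St}_R(T)|\mid p$, the orbit count $p^2/|\mathrm{St}_R(T)|\leq p$, the identity $\mathrm{St}_R(T^{-1})=tKt^{-1}$, and $gL\neq g^{-1}L$ for odd $p$) check out.
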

\begin{proof}
Suppose that $\mathrm{St}_R(T) = Z(H)$ for some $T \in \mathrm{Bsets}(\mathcal{A}) \setminus \mathrm{Bsets}(\mathcal{A}_L)$.
Then $T = hZ(H)$ for some $h \in H \setminus L$.
Note that $h = a^i b^j$ for some $1 \leq i \leq p-1$ and $0 \leq j \leq p-1$.
Since $\underline{T} \cdot \underline{T} = p \cdot \underline{h^2Z(H)}$ and $|h^2Z(H)|=p$, we have $h^2Z(H) \in \mathrm{Bsets}(\mathcal{A}) \setminus \mathrm{Bsets}(\mathcal{A}_L)$ by (\ref{A}).
Since $b^j \in \mathrm{Bsets}(\mathcal{A})$ for each $1 \leq j \leq p-1$,
it follows from Proposition \ref{prop:3} that $b^jT \in \mathrm{Bsets}(\mathcal{A}) \setminus \mathrm{Bsets}(\mathcal{A}_L)$.
These imply that every element of $\mathrm{Bsets}(\mathcal{A}) \setminus \mathrm{Bsets}(\mathcal{A}_L)$ has the form
$a^ib^jZ(H)$ for some $1 \leq i \leq p-1$, $0 \leq j \leq p-1$.
By Proposition \ref{prop:pre1}(ii), we have $L = Z(H)$, which contradicts $L \cong C_p \times C_p$.
\end{proof}

\begin{lem}[See \cite{hk}]\label{orderp4}
If $\mathcal{A}$ is Schurian, then the order of $\mathrm{Aut}(\mathcal{A})$ is $p$.
\end{lem}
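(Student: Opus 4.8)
The plan is to work on the scheme side and compute $\mathrm{Aut}(\mathcal{A})$ directly. Recall that $\mathrm{Aut}(\mathcal{A}) = \mathrm{Aut}(H, R_H(\mathrm{Bsets}(\mathcal{A})))_{1_H}$, and set $G := \mathrm{Aut}(H, R_H(\mathrm{Bsets}(\mathcal{A})))$, so that $G = \mathrm{Aut}(\mathcal{A}) H_R$ and $|G| = |\mathrm{Aut}(\mathcal{A})| \cdot p^3$. If $\mathcal{A}$ is Schurian, then taking $G$ to be the full automorphism group, the orbits of $\mathrm{Aut}(\mathcal{A}) = G_{1_H}$ on $H$ are exactly the basic sets of $\mathcal{A}$. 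Since by $(\ref{A})$ there is a basic set of size $p$, these orbits include one of size $p$, whence $p \mid |\mathrm{Aut}(\mathcal{A})|$; this gives the lower bound. For the upper bound I would fix a non-central point $t \in H \setminus L$, whose $\mathrm{Aut}(\mathcal{A})$-orbit is its basic set of size $p$, and use $|\mathrm{Aut}(\mathcal{A})| = p \cdot |(\mathrm{Aut}(\mathcal{A}))_t|$; thus it suffices to prove that the two-point stabilizer $(\mathrm{Aut}(\mathcal{A}))_t$ is trivial.

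To control $\mathrm{Aut}(\mathcal{A})$ I would first pin down the shape of its elements. By Theorem \ref{kim11} we have $L = \Or(\mathcal{A}) = \Ot(\mathcal{A})$, so each $l \in L$ is a singleton basic set; every $\sigma \in \mathrm{Aut}(\mathcal{A})$ preserves $R_H(\{l\})$, which forces $\sigma(xl) = \sigma(x)l$ for all $x \in H$ and $l \in L$, and in particular $\sigma$ fixes $L$ pointwise. Next, since $[H:L] = p$ and the quotient S-ring over $H/L \cong C_p$ is thin (because $L = \Or(\mathcal{A})$), the map induced by $\sigma$ on $H/L$ is an automorphism of a regular scheme fixing the identity coset, hence the identity; so $\sigma$ fixes every coset $a^m L$ setwise, where $a$ generates $H/L$. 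Combined with $L$-equivariance, $\sigma$ acts on $a^m L$ by a left insertion $a^m l \mapsto a^m \ell_m l$ for a fixed $\ell_m \in L$ (with $\ell_0 = 0$, written additively). The assignment $\sigma \mapsto (\ell_m)$ is then an injective homomorphism into $L^{p-1}$, so $\mathrm{Aut}(\mathcal{A})$ is an elementary abelian $p$-group.

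It remains to exploit the width-$p$ relations. Writing each basic set off $L$ as $T_i = a^i S_i$ with $S_i = \mathrm{St}_R(T_i)$ an order-$p$ subgroup of $L$ (using Proposition \ref{prop:st}, $(\ref{A})$ and $(\ref{B})$), preservation of $R_H(T_i)$ by $\sigma$ translates, after a short computation, into the linear conditions $\ell_{i+j} - \ell_j \in \phi^{-j}(S_i)$ for all $i \not\equiv 0$ and all $j$, where $\phi$ is conjugation by $a$. In both groups $H_1$ and $H_2$ the map $\phi$ is a transvection of $L \cong \mathbb{F}_p^2$ fixing $Z(H)$ pointwise and cyclically permuting the $p$ non-central lines of $L$ (shifting the slope by $1$). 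Viewing each $S_i$ as a non-central line (Lemma \ref{lem:center}), the conditions with base coset $L$ give $\ell_i \in S_i$, while the conditions using one further fixed coset place $\ell_m$ on a second, $\phi$-shifted line; intersecting these lines, I would propagate the property ``fixed pointwise'' from $L$ together with the coset of $t$ to every coset, forcing all $\ell_m = 0$ and hence $(\mathrm{Aut}(\mathcal{A}))_t = 1$.

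The hard part will be precisely this last collapse: an intersection such as $S_m \cap \phi^{-j}(S_{m-j})$ is trivial only when the two lines are distinct, so one must rule out the degenerate configuration in which the slopes of $S_1, \dots, S_{p-1}$ form an arithmetic progression whose common difference matches the shift induced by $\phi$, for then the relevant lines coincide and the intersection fails to vanish. I expect the resolution to combine the distinctness of the $p-1$ stabilizers supplied by $(\ref{B})$, the exclusion of $Z(H)$ from Lemma \ref{lem:center}, and the S-ring closure constraints on the family $\{(a^i L, S_i)\}$ (equivalently, the surjectivity onto each $S_i$ of the coordinate projections of $\mathrm{Aut}(\mathcal{A})$ that Schurity forces) to show that this bad configuration is incompatible with $\mathcal{A}$ being Schurian; establishing that incompatibility is the crux of the argument.
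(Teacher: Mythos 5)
Your reduction is the same as the paper's: by orbit--stabilizer, $|\mathrm{Aut}(\mathcal{A})| = p\,|(\mathrm{Aut}(\mathcal{A}))_t|$ for a point $t$ in a basic set of size $p$ (Schurity makes that basic set an orbit of $\mathrm{Aut}(\mathcal{A})$), so everything hinges on showing that the two-point stabilizer is trivial. That is exactly where your argument stops being a proof. You replace the decisive step by a coordinate computation --- writing $\sigma$ as a tuple $(\ell_m)\in L^{p-1}$ and deriving membership conditions of the form $\ell_{i+j}-\ell_j\in\phi^{-j}(S_i)$ --- and then observe that the conclusion $\ell_m=0$ only follows when the two lines you intersect are distinct; you explicitly defer the exclusion of the degenerate configuration to a combination of (\ref{B}), Lemma \ref{lem:center} and unspecified ``closure constraints.'' Since that exclusion is the entire content of the upper bound, the proposal has a genuine gap. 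It is also not clear the degeneracy can be ruled out in the form you state it: in the Schurian example $\mathcal{A}_1$ of Example \ref{s-rings:example} the slopes of the stabilizers do form an arithmetic progression, so a criterion that simply forbids ``slopes in arithmetic progression matched to the shift of $\phi$'' risks colliding with genuinely Schurian cases; the linear algebra would have to be carried out in full before one knows whether this route closes. There is in addition a side error that would propagate through the computation: the Cayley relations are $R_H(T)=\{(h,th)\}$, so an automorphism fixing $1_H$ commutes with \emph{left} translation by $\Ot(\mathcal{A})=L$, i.e.\ $\sigma(lx)=l\sigma(x)$, not $\sigma(xl)=\sigma(x)l$; your normal form and the resulting congruences need to be redone on the correct side.

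For comparison, the paper closes this step with a short intersection-number argument and no coordinates: $G_{\alpha,\beta}$ fixes each $\gamma\in\alpha r_1$ because $a_{r_1tr_1}=1$ for $t$ in the thin stabilizer of $r_1$, and then any $\delta\in\alpha r_2$ is pinned down either because the connecting relation $s_1$ is thin (when $r_1\Or(S)=r_2\Or(S)$) or because $a_{r_2s_1^{\ast}r_1}\le 1$ by Lemma \ref{lem:rel}, distinct cosets of the thin residue having distinct, hence trivially intersecting, thin stabilizers by (\ref{B}). Adopting that argument would let you discard the $(\ell_m)$ machinery entirely.
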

\begin{proof}
Let $(X,S)=(H, R_H(\mathrm{Bsets}(\mathcal{A}))$.
For convenience, we denote $\mathrm{Aut}(X, S)$ by $G$.
By the orbit-stabilizer property, we have
$|G| =  |G_\alpha| |X|$ for a fixed $\alpha \in X$.
Since $|X|=p^3$, it suffices to verify $|G_\alpha|=p$.

Let $r_1$ is an element of $S \setminus \Ot(S)$.
We fix an element $\beta$ of $X$ such that $\beta \in \alpha r_1$.
Under the assumption that $(X, S)$ is Schurian,
we have
\[|G_\alpha| = |G_{\alpha, \beta}||\alpha r_1| = |G_{\alpha, \beta}|p\]
by the orbit-stabilizer property.
We shall show $|G_{\alpha, \beta}|=1$.

Let $\gamma$ be an element of $\alpha r_1$ such that $\gamma \in \beta t$ for some $t \in \{ s \in S \mid r_1 s = r_1 \} \setminus \{1_X\}$.
$G_{\alpha, \beta}$ fixes $\gamma$ since $a_{r_1 t r_1} = 1$. Thus, $G_{\alpha, \beta}$ fixes each element of $\alpha r_1$.

We consider an arbitrary element $r_2 \in S \setminus \Ot(S)$ such that $r_1 \neq r_2$.
For $\delta \in \alpha r_2$, we assume that there exists $g \in G_{\alpha, \beta}$ such that $\delta^g \neq \delta$.
Then there exist $s_1, s_2 \in S$ such that $(\beta, \delta), (\beta, \delta^g) \in s_1$ and $(\gamma, \delta), (\gamma, \delta^g) \in s_2$.
This means $a_{s_1 s_2^\ast t} \geq 2$ and $a_{r_2 s_1^\ast r_1} \geq 2$.

We divide our consideration into the following cases.
\begin{enumerate}
\item If $r_1 \Or(S) = r_2 \Or(S)$, then $s_1, s_2 \in \Ot(S)$.
This contradicts $a_{s_1 s_2^\ast t} = 1$.
Thus, $G_{\alpha, \beta}$ fixes each element of $\alpha r_2$. This implies that $G_{\alpha, \beta}$ fixes each element of $\alpha s$ with $r_1 \Or(S) = s \Or(S)$.

\item If $r_1 \Or(S) \neq r_2 \Or(S)$, then $s_1, s_2 \in S \setminus \Ot(S)$.
This contradicts $a_{r_2 s_1^\ast r_1} =1$.
\end{enumerate}
Whichever the case may be, $G_{\alpha, \beta}$ fixes each element of $\alpha s$ for all $s \in S$.
Hence, we have $|G_{\alpha, \beta}|=1$.
\end{proof}

In the rest of this section, we consider the association scheme $(H, R_H(\mathrm{Bsets}(\mathcal{A})))$ corresponding to the S-ring $\mathcal{A}$.
We fix an element $z$ of $Z(H)$.

\begin{lem}\label{per:induce}
Let $T_1, \dotsc, T_{p-1}$ be elements of $\mathrm{Bsets}(\mathcal{A}) \setminus \mathrm{Bsets}(\mathcal{A}_L)$ such that
$\mathrm{St}_R(T_i) \neq \mathrm{St}_R(T_j)$ for distinct $1 \leq i,j \leq p-1$.
Then we have
\begin{equation}\label{bsets-ordering}
\mathrm{Bsets}(\mathcal{A}) = \{ \{l \} \mid l \in L \} \cup \bigcup_{1 \leq i \leq p-1} \{ T_iz^j \mid 0 \leq j \leq p-1 \}.
\end{equation}
\end{lem}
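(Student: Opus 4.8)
The plan is to count the elements of $\mathrm{Bsets}(\mathcal{A})$ that lie outside $\mathrm{Bsets}(\mathcal{A}_L)$ and show that the $p(p-1)$ sets $T_i z^j$ are exactly these, all distinct, and exhaust the complement of $L$ in $H$. Since $|H| = p^3$ and $|L| = p^2$, there are $p^3 - p^2 = p^2(p-1)$ elements of $H \setminus L$. By hypothesis (\ref{A}) every block $T$ outside $\mathrm{Bsets}(\mathcal{A}_L)$ has size exactly $p$, so there are precisely $p(p-1)$ such blocks. If I can produce $p(p-1)$ distinct blocks of the stated form $T_i z^j$, each of size $p$ and disjoint from $L$, they must be all of them, and together with the $p^2$ singletons $\{l\}$ for $l \in L$ (which form $\mathrm{Bsets}(\mathcal{A}_L)$ by Proposition~\ref{prop:pre1} and $\Or(\mathcal{A})=\Ot(\mathcal{A})$) they give the full partition (\ref{bsets-ordering}).

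First I would check that each $T_i z^j$ genuinely belongs to $\mathrm{Bsets}(\mathcal{A})$. Because $z \in Z(H)$ is central and $\{z^j\}$ is a singleton block (as $z^j \in L = \Ot(\mathcal{A})$), Proposition~\ref{prop:3} applies: right multiplication of the block $T_i$ by the singleton $z^j$ sends $T_i$ to another block $T_i z^j \in \mathrm{Bsets}(\mathcal{A})$. Each such block has size $p$ and lies outside $L$ (since $T_i$ does and $z^j \in L$), hence outside $\mathrm{Bsets}(\mathcal{A}_L)$. This produces, for each fixed $i$, the $p$ blocks $T_i, T_i z, \dotsc, T_i z^{p-1}$.

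Next I would verify the counting is tight, i.e. that the $p(p-1)$ blocks $\{T_i z^j : 1 \le i \le p-1,\ 0 \le j \le p-1\}$ are pairwise distinct. For distinct $i, i'$ the stabilizers $\mathrm{St}_R(T_i) \neq \mathrm{St}_R(T_{i'})$ by hypothesis; since right multiplication by a central singleton $z^j$ does not change the stabilizer ($\mathrm{St}_R(T_i z^j) = \mathrm{St}_R(T_i)$ because $z^j$ centralizes everything), blocks coming from different $i$ have different stabilizers and so cannot coincide. For fixed $i$ and distinct $j, j'$, equality $T_i z^j = T_i z^{j'}$ would force $z^{j-j'} \in \mathrm{St}_R(T_i)$; here I must use that $z$ generates $Z(H)$ and that $\mathrm{St}_R(T_i) \neq Z(H)$ by Lemma~\ref{lem:center}. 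Since $|\mathrm{St}_R(T_i)| = |T_i| = p$ (the stabilizer of a block acts regularly from the right, by Proposition~\ref{prop:st} and $|T| = p$) and $Z(H) = \langle z\rangle$ also has order $p$, the condition $z^{j-j'} \in \mathrm{St}_R(T_i)$ together with $\mathrm{St}_R(T_i) \neq Z(H)$ forces $z^{j-j'} = 1$, whence $j = j'$ modulo $p$.

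The main obstacle will be the distinctness argument for fixed $i$, which is where the role of the center and Lemma~\ref{lem:center} is essential: one must argue carefully that $\mathrm{St}_R(T_i) \cap Z(H) = \{1\}$, so that the $p$ translates by powers of the central $z$ are genuinely distinct. Once distinctness is established, the count $p(p-1)$ matches the forced number of non-$L$ blocks exactly, so no other blocks outside $\mathrm{Bsets}(\mathcal{A}_L)$ can exist, and the partition is completely determined as in (\ref{bsets-ordering}).
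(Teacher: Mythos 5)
Your proof is correct and takes essentially the same route as the paper's (much terser) proof: Proposition~\ref{prop:3} to get $T_iz^j \in \mathrm{Bsets}(\mathcal{A})$, Lemma~\ref{lem:center} to rule out $T_i = T_iz^j$ for $j \not\equiv 0$, and the size count from condition (\ref{A}) to see that these $p(p-1)$ blocks exhaust the complement of $L$; you merely make explicit the counting and the cross-$i$ distinctness (via invariance of $\mathrm{St}_R$ under central translation) that the paper leaves implicit. The only quibble is your parenthetical assertion that $|\mathrm{St}_R(T_i)| = p$, which is not justified by ``regularity'' alone; but your argument only needs that $|\mathrm{St}_R(T_i)|$ divides $|T_i| = p$ (the stabilizer acts freely on $T_i$ by right translation), so that a nontrivial central element in $\mathrm{St}_R(T_i)$ would force $\mathrm{St}_R(T_i) = Z(H)$, contradicting Lemma~\ref{lem:center}.
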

\begin{proof}
By Lemma \ref{lem:center}, we have $T_i \neq T_i z^j$ for each $z^j \in Z(H)$.
It follows from Proposition \ref{prop:3} that $T_i z \in \mathrm{Bsets}(\mathcal{A})$.
This completes the proof.
\end{proof}

From now on, we assume that for each $R_H(T) \in R_H(\mathrm{Bsets}(\mathcal{A}))$,
the rows and columns of the adjacency matrix of $R_H(T)$ are indexed by the following:
\begin{enumerate}
\item Assume that each $T_i$ has a fixed ordering of elements in itself;
\item Assume that $\mathrm{Bsets}(\mathcal{A}_L)$ has a fixed ordering of elements in itself;
\item A fixed ordering of $L$ and $T_i$'s is
\[(L, T_1, T_1z, \dotsc T_1z^{p-1}, T_2, T_2z, \dotsc T_2z^{p-1}, \dotsc, T_{p-1}, T_{p-1}z, \dotsc, T_{p-1}z^{p-1}),\]
\end{enumerate}
where $L$ and $T_iz^j$ ($1 \leq i \leq p-1, 0 \leq j \leq p-1$) are given in (\ref{bsets-ordering}).

\begin{lem}\label{per}
Let $T, T'$ and $T''$ be elements of $\mathrm{Bsets}(\mathcal{A}) \setminus \mathrm{Bsets}(\mathcal{A}_L)$ such that
\[R_H(T) \cap (T' \times T'') \neq \emptyset.\]
Then we have the following:
\begin{enumerate}
\item The adjacency matrix of $R_H(T) \cap (T' \times T'')$ is a permutation matrix;
\item $R_H(Tz^k) \cap (T' \times T'') \neq \emptyset$ for each $1 \leq k \leq p-1$.
\end{enumerate}
\end{lem}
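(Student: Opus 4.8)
The plan is to reduce everything to a coset computation in $H$ combined with the intersection-number machinery of Lemmas \ref{lem:intersection} and \ref{lem:rel}. Write $r = R_H(T)$, $r' = R_H(T')$, $r'' = R_H(T'')$. I would first record the local structure of the basic sets: for a basic set $T$ outside $\mathcal{A}_L$, condition (\ref{A}) gives $|T|=p$, and since $T$ is a union of left cosets of the $\mathcal{A}$-subgroup $\mathrm{St}_R(T) \subseteq L$ (Propositions \ref{prop:st} and \ref{prop:pre1}), the stabilizer has order $p$ and $T = t\,\mathrm{St}_R(T)$; a direct computation then gives $\underline{T^{-1}}\cdot\underline{T} = p\,\underline{\mathrm{St}_R(T)}$, so in the scheme $r^\ast r$ is exactly the set of $p$ thin relations attached to $\mathrm{St}_R(T)$. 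Since $z \in L$ is central, each family $\{T_i z^j : 0 \le j \le p-1\}$ from Lemma \ref{per:induce} lies in the single $L$-coset $t_i L$ and, being $p$ disjoint blocks of size $p$, fills it; counting the $p-1$ nontrivial cosets of $L$ against the $p-1$ distinct stabilizers then shows that the stabilizer of a basic set outside $\mathcal{A}_L$ both determines and is determined by its $L$-coset. In particular, two such basic sets have equal stabilizers if and only if they lie in the same coset of $L$.

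Next I would extract the coset information hidden in the hypothesis. Because $(x,y) \in R_H(T) \cap (T'\times T'')$ means $y x^{-1} \in T$ with $x \in T'$ and $y \in T''$, the hypothesis is equivalent to $T \cap T''(T')^{-1} \neq \emptyset$. Writing $T' = t_a K_a z^c$ and $T'' = t_b K_b z^d$ with $K_a = \mathrm{St}_R(T')$ and $K_b = \mathrm{St}_R(T'')$, one computes $T''(T')^{-1} = t_b K_b K_a t_a^{-1} z^{d-c}$. If $K_a = K_b$ this lies in $L$ and cannot meet $T$ (which sits in a nontrivial coset of $L$); hence the hypothesis forces $K_a \neq K_b$, so $K_a K_b = L$ and $T''(T')^{-1} = t_b t_a^{-1} L$ is a full coset of $L$. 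Consequently $T$ lies in the coset $t_b t_a^{-1} L$. Since $t_a \notin L$, this coset differs from $t_b L$, which contains $T''$; by the correspondence of the previous paragraph, $\mathrm{St}_R(T) \neq \mathrm{St}_R(T'')$, equivalently $r^\ast r \cap (r'')^\ast r'' = \{1_X\}$.

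With this in hand, part (i) is short. By Lemma \ref{lem:constant}(i), all three valencies being $p$ forces $a_{r' r r''} = a_{r'' r^\ast r'}$. Applying Lemma \ref{lem:rel} to the pair $((r'')^\ast, r^\ast)$, whose products are $(r'')^\ast r''$ and $r^\ast r$, and using $(r'')^\ast r'' \cap r^\ast r = \{1_X\}$, I get $a_{r'' r^\ast r'} \le 1$; since the block is nonempty, this common value equals $1$. By Lemma \ref{lem:intersection} every column of the adjacency matrix of $R_H(T)\cap(T'\times T'')$ then carries a single $1$, and the same lemma applied to $r^\ast \cap (T''\times T')$ shows every row does too; as the matrix is $p\times p$, it is a permutation matrix. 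For part (ii), note that $Tz^k$ is again a basic set outside $\mathcal{A}_L$ lying in the same coset $t_b t_a^{-1} L$ as $T$; since $T''(T')^{-1}$ is the \emph{entire} coset $t_b t_a^{-1} L$, we have $Tz^k \subseteq T''(T')^{-1}$, whence $R_H(Tz^k)\cap(T'\times T'') \neq \emptyset$.

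The structure-constant identities and the final matrix bookkeeping are routine once the subgroups are known to be distinct. The main obstacle is the coset analysis of the second paragraph: identifying $T''(T')^{-1}$ as a full coset of $L$ and, through the stabilizer-to-coset correspondence, guaranteeing that $\mathrm{St}_R(T)$ and $\mathrm{St}_R(T'')$ are distinct so that Lemma \ref{lem:rel} applies. Everything downstream, including part (ii), is forced once the bijection between nontrivial $L$-cosets and stabilizers is in place.
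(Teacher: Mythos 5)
Your proof is correct, and it reaches the same two conclusions by the same overall strategy as the paper (show the relevant intersection number is $1$, then invoke Lemma \ref{lem:intersection}; show the relations meeting $T'\times T''$ sweep out a full $z$-coset), but the intermediate steps are organized differently. The paper argues inside the scheme: from $R_H(T)\notin\Or$ it deduces $R_H(T')\Or\neq R_H(T'')\Or$, i.e.\ $\mathrm{St}_R(T')\neq\mathrm{St}_R(T'')$, and then asserts the matrix identity $A_{R_H(T')^\ast}A_{R_H(T'')}=\sum_{h\in R_H(T)\Or}A_h$, from which multiplicity one and part (ii) both drop out via Lemmas \ref{lem:intersection} and \ref{lem:inter-relation}. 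You instead work directly with group elements: you set up the bijection between nontrivial $L$-cosets and stabilizers, compute $T''(T')^{-1}$ as a full coset of $L$, conclude $\mathrm{St}_R(T)\neq\mathrm{St}_R(T'')$ (a different pair of stabilizers than the paper distinguishes), and then obtain $a_{R_H(T'')R_H(T)^\ast R_H(T')}\leq 1$ from Lemma \ref{lem:rel} --- a lemma the paper's proof never uses. Your route buys two things the paper leaves implicit: an actual justification of the matrix identity (the paper's ``This implies'' is exactly your coset computation $K_aK_b=L$), and an explicit check that the rows, not just the columns, of the block each carry a single $1$. The paper's route is shorter because the identity $A_{R_H(T')^\ast}A_{R_H(T'')}=\sum_{h\in R_H(T)\Or}A_h$ simultaneously delivers both parts of the lemma, whereas you handle part (ii) by a separate containment $Tz^k\subseteq T''(T')^{-1}$; both are valid.
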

\begin{proof}
It follows from Lemma \ref{lem:inter-relation} that $R_H(T) \in R_H(T')^\ast R_H(T'')$.
Put $\Or(R_H):= \Or(R_H(\mathrm{Bsets}(\mathcal{A})))$.
Since $R_H(T) \cap (T' \times T'') \neq \emptyset$, we have $R_H(T')\Or(R_H) \neq R_H(T'')\Or(R_H)$.
This means $\mathrm{St}_R(T') \neq \mathrm{St}_R(T'')$.
This implies $A_{R_H(T')^\ast} A_{R_H(T'')} = \sum_{h \in R_H(T)\Or(R_H)} A_h$.
It follows from Lemmas \ref{lem:intersection} that
the adjacency matrix of $R_H(T) \cap (T' \times T'')$ is a permutation matrix.
It follows from Lemma \ref{lem:inter-relation} that $R_H(Tz^k) \cap (T' \times T'') \neq \emptyset$ for each $1 \leq k \leq p-1$.
\end{proof}

From now on, for $E, F, G \subseteq H$, we denote $\{(eg, fg) \mid e \in E, f \in F, g \in G \}$ by $(E \times F)^G$.

\vskip5pt
We introduce a notion of compatibility given in \cite{hkp,kim20130401}.
Denote by $r(\beta, \gamma)$ the element of $R_H(\mathrm{Bsets}(\mathcal{A}))$ containing $(\beta, \gamma)$.
We define the subset $\Gamma_1$ of the symmetric group on $H$ with respect to $\mathrm{Bsets}(\mathcal{A})$ such that
\begin{enumerate}
\item for $\sigma \in \Gamma_1$, $L$ is the set of fixed points of $\sigma $,
\item for $\sigma \in \Gamma_1$ and $T \in \mathrm{Bsets}(\mathcal{A})$ with size $p$, $\sigma$ is a $p$-cycle on $T$ such that $r(\beta, \gamma) =  r(\beta^\sigma, \gamma^\sigma)$ for each $(\beta, \gamma) \in T \times T$.
\end{enumerate}

For $T, T' \in \mathrm{Bsets}(\mathcal{A}) $ and $\sigma \in \Gamma_1$, we say that $T$ and $T'$ are
\textit{compatible with respect to $\sigma$} if
\[ r(\beta, \gamma) = r(\beta^\sigma, \gamma^\sigma) ~\text{for each}~ (\beta, \gamma) \in T \times T'.\]

We shall write $T \sim_\sigma T'$ if $T$ and $T'$ are compatible with respect to $\sigma$, otherwise $T \nsim_\sigma T'$.

\begin{thm}\label{mainthm1}
Let $T_1, T_2, \dotsc, T_{p-1}$ be elements of $\mathrm{Bsets}(\mathcal{A}) \setminus \mathrm{Bsets}(\mathcal{A}_L)$
such that
\[\mathrm{St}_R(T_i) \neq \mathrm{St}_R(T_j)\]
for distinct $1 \leq i, j \leq p-1$.
Then $\mathcal{A}$ is Schurian if and only if
there exists a nontrivial element $\sigma \in \Gamma_1$ such that
$T_i \sim_\sigma T_j$ for all $1 \leq i, j \leq p-1$.
\end{thm}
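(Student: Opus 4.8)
The plan is to reduce the Schurity of $\mathcal{A}$ to a statement about the orbits of $\mathrm{Aut}(\mathcal{A})$. Using the identities $\mathrm{Aut}(\mathcal{A}) = \mathrm{Aut}(H, R_H(\mathrm{Bsets}(\mathcal{A})))_{1_H}$ and $\mathrm{Aut}(H, R_H(\mathrm{Bsets}(\mathcal{A}))) = \mathrm{Aut}(\mathcal{A})H_R$, one checks that $\mathcal{A}$ is Schurian if and only if the orbits of $\mathrm{Aut}(\mathcal{A})$ on $H$ are exactly the elements of $\mathrm{Bsets}(\mathcal{A})$: the orbits always refine the blocks because $\mathrm{Aut}(\mathcal{A})$ fixes $1_H$ and each relation $R_H(T)$ (whose neighbourhood at $1_H$ is $T$), while the Schurian condition forces equality. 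I would record the structural facts needed at the outset: since $|T|=p$ and every block has size $1$ or $p$, Proposition \ref{prop:st} shows that each $K:=\mathrm{St}_R(T)$ is an $\mathcal{A}$-subgroup of order $p$ contained in $\Ot(\mathcal{A})=L$, and $T=gK$ is a single left coset; moreover $L\lhd H$ has index $p$ and is abelian, $z\in Z(H)\le L$, and every right translation $\rho_g\colon x\mapsto xg$ is an automorphism of $\mathcal{C}(\mathcal{A})$.

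For the forward direction, assume $\mathcal{A}$ is Schurian. By Lemma \ref{orderp4} we have $|\mathrm{Aut}(\mathcal{A})|=p$, so $\mathrm{Aut}(\mathcal{A})=\langle\sigma\rangle$ for some $\sigma$ of order $p$ fixing $1_H$. By the reformulation above its orbits are precisely the blocks, so $\sigma$ fixes $L$ pointwise and acts as a $p$-cycle on every block of size $p$; being a scheme automorphism it satisfies $r(\beta,\gamma)=r(\beta^\sigma,\gamma^\sigma)$ for all $(\beta,\gamma)$. Hence $\sigma\in\Gamma_1$ is nontrivial and $T_i\sim_\sigma T_j$ holds for all $i,j$, as required.

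For the converse I would first pin down the shape of an arbitrary $\sigma\in\Gamma_1$ on a single block $T=gK$ with $K=\langle k\rangle$. The relations inside $T$ are the pairwise distinct singletons $R_H(\{gk^cg^{-1}\})$, with $gKg^{-1}\le L$; writing $\sigma(gk^a)=gk^{\pi(a)}$, the requirement $r(\beta,\gamma)=r(\beta^\sigma,\gamma^\sigma)$ forces $\pi(b)-\pi(a)\equiv b-a\pmod p$, so $\pi$ is a translation and $\sigma|_T=\rho_{s(T)}|_T$ for some $s(T)\in K\setminus\{1\}$. Given a nontrivial $\sigma\in\Gamma_1$ with $T_i\sim_\sigma T_j$ for all $i,j$, and writing $T_i=g_iK_i$, I would then define a new permutation $\sigma^\ast$ that fixes $L$ pointwise and acts on every $z$-translate $T_iz^j$ by the single translation $s_i:=s(T_i)$ used by $\sigma$ on the representative $T_i$. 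Then $\sigma^\ast\in\Gamma_1$ agrees with $\sigma$ on the representatives, and a direct computation using centrality of $z$ shows $\sigma^\ast\rho_z=\rho_z\sigma^\ast$.

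It remains to verify that $\sigma^\ast$ is an automorphism of $\mathcal{C}(\mathcal{A})$, that is, $r(\beta,\gamma)=r(\beta^{\sigma^\ast},\gamma^{\sigma^\ast})$ for all $\beta,\gamma\in H$; this is the main obstacle and is handled case by case. For $\beta\in T_iz^j$, $\gamma\in T_{i'}z^{j'}$ one writes $\beta=\beta_0z^j$, $\gamma=\gamma_0z^{j'}$ with $\beta_0\in T_i$, $\gamma_0\in T_{i'}$ and computes $\gamma^{\sigma^\ast}(\beta^{\sigma^\ast})^{-1}=\gamma_0^{\sigma^\ast}(\beta_0^{\sigma^\ast})^{-1}z^{j'-j}$; since $\rho_z$ is an automorphism and the representatives are compatible, the desired equality drops down from $T_i\sim_\sigma T_{i'}$. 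For mixed pairs with one argument in $L$ the point is that the extra factor produced by $\sigma^\ast$ is, up to conjugation by $g_i$, an element of the right stabilizer of the block containing $\gamma\beta^{-1}$, so it preserves that block; this uses $\mathrm{St}_R((T_iz^m)^{-1})=g_iK_ig_i^{-1}$ and the abelianness of $L$. Once $\sigma^\ast$ is known to be a nontrivial automorphism fixing $1_H$, its orbits are exactly the blocks, so $\langle\sigma^\ast\rangle\le\mathrm{Aut}(\mathcal{A})$ forces the orbits of $\mathrm{Aut}(\mathcal{A})$ to coincide with $\mathrm{Bsets}(\mathcal{A})$, and $\mathcal{A}$ is Schurian by the reformulation. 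I expect the bookkeeping in the mixed-pair cases, together with the choice making $\sigma^\ast$ commute with $\rho_z$, to be the delicate part.
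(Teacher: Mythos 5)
Your proposal is correct and follows essentially the same route as the paper: both directions rest on Lemma \ref{orderp4}, and your $\sigma^\ast$ (extend $\sigma$ from the representatives $T_i$ to the translates $T_iz^j$ so as to commute with $\rho_z$) is exactly the paper's $\sigma'$, since $(hz^l)^{\sigma'}=h^{\sigma}z^l=(hz^l)s_i$ by centrality of $z$. The one genuine difference is in the verification: your preliminary observation that every element of $\Gamma_1$ restricts on a block $T=gK$ to right translation by some $s(T)\in K=\mathrm{St}_R(T)$ turns all the compatibility checks into one-line coset computations (e.g.\ $\gamma^{\sigma^\ast}(\beta^{\sigma^\ast})^{-1}=\gamma_0^{\sigma}(\beta_0^{\sigma})^{-1}z^{j'-j}$), replacing the paper's adjacency-matrix Claims 1 and 2; this is a cleaner bookkeeping device, though you should spell out why $|\mathrm{St}_R(T)|=p$ rather than $1$ (it follows from $a_{R_H(T^{-1})R_H(T)R_H(\{h\})}=p\,a_{R_H(T)R_H(\{h\})R_H(T)}$, which forces $T^{-1}T\subseteq\mathrm{St}_R(T)$), since the coset description $T=gK$ underpins your whole argument.
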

\begin{proof}
We consider $(H, R_H(\mathrm{Bsets}(\mathcal{A})))$ corresponding to $\mathcal{A}$.
By Lemma \ref{orderp4}, $\mathcal{A}$ is Schurian if and only if there exists a nontrivial element $\sigma \in \Gamma_1$
such that $T \sim_\sigma T'$ for all $T, T' \in \mathrm{Bsets}(\mathcal{A})$.
It suffices to prove the sufficiency.
Note that $T \sim_\sigma T'$ for all $\sigma \in \Gamma_1$, $T \in \mathrm{Bsets}(\mathcal{A}_L)$ and $T' \in \mathrm{Bsets}(\mathcal{A}) \setminus \mathrm{Bsets}(\mathcal{A}_L)$.

\vskip5pt
From $\sigma$ given in the assumption,
we find a nontrivial element $\sigma'$ of $\Gamma_1$ such that $T \sim_{\sigma'} T'$ for all $T, T' \in \mathrm{Bsets}(\mathcal{A})$.

Define $\sigma' \in Sym(H)$ such that
\begin{enumerate}
\item $\sigma'|_{T_i} = \sigma|_{T_i}$ for all $1 \leq i \leq p-1$,
\item $h^{\sigma'}z^l = (hz^l)^{\sigma'}$ for all $h \in T_i (1 \leq i \leq p-1), 1 \leq l \leq p-1$.
\end{enumerate}

First of all, we verify $\sigma' \in \Gamma_1$.
For a given $T_iz^l$, we show $r(\beta, \gamma) = r(\beta^{\sigma'}, \gamma^{\sigma'})$ for each $(\beta, \gamma) \in T_iz^l \times T_iz^l$.
There exists $T \in \mathrm{Bsets}(\mathcal{A}_L)$ such that $(\beta, \gamma) \in R_H(T) \cap (T_iz^l \times T_iz^l)$.
Since $(R_H(T) \cap (T_i \times T_i))^{z^l} = R_H(T) \cap (T_iz^l \times T_iz^l)$,
there exists $(\beta_0, \gamma_0) \in R_H(T) \cap (T_i \times T_i)$ such that $(\beta_0, \gamma_0)^{z^l} = (\beta, \gamma)$.
Thus, it follows from the definition of $\sigma'$ that
$(\beta^{\sigma'}, \gamma^{\sigma'}) = ((\beta_0z^l)^{\sigma'}, (\gamma_0z^l)^{\sigma'}) = (\beta_0^{\sigma'}z^l, \gamma_0^{\sigma'}z^l) = (\beta_0^{\sigma}z^l, \gamma_0^{\sigma}z^l) \in R_H(T)^{z^l} =  R_H(T)$.

\vskip5pt
Next, we verify $T_i \sim_{\sigma'} T_i z^l$ for all $1 \leq l \leq p-1$.
Let $(\beta, \gamma) \in T_i \times T_iz^l$.
Then there exists $T \in \mathrm{Bsets}(\mathcal{A}_L)$ such that $(\beta, \gamma) \in R_H(T) \cap (T_i \times T_iz^l)$,
and we have $\gamma = \beta tz^m$ for some $t \in \mathrm{St}_R(T)$ and $m$.
Since $\beta t = q \beta$ for some $q \in H$,
we have $(\beta, \gamma) =  (\beta, \beta tz^m) = (\beta, qz^m \beta ) \in r(1, qz^m)$.
It follows from the definition of $\sigma'$ that
$(\beta^{\sigma'}, \gamma^{\sigma'}) = (\beta^\sigma, (\beta t)^\sigma z^m) \in
r(\beta^\sigma, (\beta t)^\sigma) r((\beta t)^\sigma, (\beta t)^\sigma z^m) = r(\beta, \beta t) r((\beta t)^\sigma, (\beta t)^\sigma z^m)
= r(\beta, q \beta) r(1, z^m) = r(1, q) r(q, qz^m) = r(1, qz^m)$.
Thus, $T_i \sim_{\sigma'} T_i z^l$ for all $1 \leq l \leq p-1$.

By the same argument, for each $T_i z^l$ we have $T_i  z^l \sim_{\sigma'} T_i z^l z^m$ for all $1 \leq m \leq p-l-1$.
This implies that $T_i z^l\sim_{\sigma'} T_i z^n$ for all $0 \leq l, n \leq p-1$.

\vskip5pt
Finally, we verify $T_i z^l \sim_{\sigma'} T_j z^n$ for distinct $1 \leq i, j \leq p-1$ and all $1 \leq l, n \leq p-1$.

Now we say that $(T_i \times T_j)$ is equivalent to $(T_i z^l \times T_j z^n)$
if the adjacency matrix of $R_H(T) \cap (T_i \times T_j)$ concides with that of $R_H(T') \cap (T_i z^l \times T_j z^n)$
for some $T, T' \in \mathrm{Bsets}(\mathcal{A}) \setminus \mathrm{Bsets}(\mathcal{A}_L)$.
For convenience, we denote it by $(T_i \times T_j) \simeq (T_i z^l \times T_j z^n)$.
We show that $(T_i \times T_j)$ is equivalent to $(T_i z^l \times T_j z^n)$ for all $0 \leq l, n \leq p-1$.

\vskip5pt
\textbf{Claim 1}: $(T_i \times T_j)  \simeq  (T_i z^l \times T_j z^l)$.

There exists $T \in \mathrm{Bsets}(\mathcal{A}) \setminus \mathrm{Bsets}(\mathcal{A}_L)$ such that
$R_H(T) \cap (T_i \times T_j) \neq \emptyset$.

Let $(\beta,\gamma)$ be an element of $R_H(T) \cap (T_i \times T_j)$.
Then $(\beta z^l,\gamma z^l) \in T_iz^l \times T_jz^l$.
By the definition of $R_H(T)$, we have $(\beta z^l,\gamma z^l) \in R_H(T)$.
Since the orderings of $T_iz^l$ and $T_jz^l$ are depending on $T_i$ and $T_j$ respectively,
it is easy to see that the adjacency matrix of $R_H(T) \cap (T_i \times T_j)$ coincides with that of $R_H(T) \cap (T_iz^l \times T_jz^l)$.
This completes the proof of Claim $1$.

\vskip5pt
\textbf{Claim 2}: $(T_i \times T_j)  \simeq  (T_i \times T_j z^m)$.

There exists $T \in \mathrm{Bsets}(\mathcal{A}) \setminus \mathrm{Bsets}(\mathcal{A}_L)$ such that
$R_H(T) \cap (T_i \times T_j) \neq \emptyset$.

For $T, Tz^m \in \mathrm{Bsets}(\mathcal{A}) \setminus \mathrm{Bsets}(\mathcal{A}_L)$, we have
\[R_H(T) \cap (T_i \times T_j) = (\{1_H\} \times T)^{T_i} \cap (T_i \times T_j) ~~\text{and}\]
\[R_H(Tz^m) \cap (T_i \times T_jz^m) = (\{1_H\} \times Tz^m)^{T_i} \cap (T_i \times T_jz^m).\]

This implies that the adjacency matrix of $R_H(T) \cap (T_i \times T_j)$ coincides with that of $R_H(Tz^m) \cap (T_i \times T_jz^m)$.
This completes the proof of Claim $2$.

\vskip5pt
By Claims $1$ and $2$, we have $(T_i \times T_j)  \simeq  (T_i z^l \times T_j z^l) \simeq (T_i z^l \times T_j z^{l + (m-l)})$.
This means that $(T_i \times T_j)  \simeq  (T_i z^l \times T_j z^n)$ for all $0 \leq l, n \leq p-1$.
So, it is easy to see that $T_i z^l \sim_{\sigma'} T_j z^n$.

\vskip5pt
Therefore, we have $T \sim_{\sigma'} T'$ for all $T, T' \in \mathrm{Bsets}(\mathcal{A}) \setminus \mathrm{Bsets}(\mathcal{A}_L)$.
\end{proof}

\section{Construction of suitable sequences}\label{sec:comb}

In this section, we introduce a notion related to construction of $p$-S-rings.

Let $\mathbb{Z}_p = \{ 0, 1, \dotsc, p-1 \}$.
A sequence $(x_1, x_2, \dotsc, x_{p-1})$ is called \textit{suitable} if $\{ x_1, x_2, \dotsc, x_{p-1} \} = \mathbb{Z}_p \setminus \{l\}$
for some $l \in \mathbb{Z}_p$ such that
$x_1 =0$, $x_i + i \equiv x_{p-i}$ $(\mathrm{mod}~ p)$ for each $1 \leq i \leq \frac{p-1}{2}$.

\begin{ex}\label{ex:seq}
In $\mathbb{Z}_p$, $(x_i \mid 1 \leq i \leq p-1)$ is suitable, where $x_i \equiv (\frac{p-1}{2})(i-1) ~~(\mathrm{mod}~ p)$.
\end{ex}

\begin{ex}\label{ex:p7}
In $\mathbb{Z}_7$, $(0, 4, 2, 5, 6, 1)$ and $(0, 2, 3, 6, 4, 1)$ are suitable.
\end{ex}

\begin{ex}\label{ex:p11}
In $\mathbb{Z}_{11}$, $(0, 6, 10, 3, 4, 9, 7, 2, 8, 1)$ and $(0, 4, 10, 5, 3, 8, 9, 2, 6, 1)$ are suitable.
\end{ex}

\begin{prob}
Find all of suitable sequences of $\mathbb{Z}_p$, where $p$ is an odd prime.
\end{prob}

\vskip10pt
In the case of $p \equiv 3 ~~(\mathrm{mod}~ 4)$, we construct a suitable sequence.
The following proposition is a generalization of first cases in Examples \ref{ex:p7} and \ref{ex:p11}, which is different from Example \ref{ex:seq}.

\begin{prop}\label{prop:newseq}
For each odd prime $p=4k+3$, there exists a suitable sequence with $x_2 = \frac{p+1}{2}$.
\end{prop}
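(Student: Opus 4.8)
The plan is to recast a suitable sequence as a single structured bijection of $\mathbb{Z}_p$ and then reduce the whole statement to a distinctness (packing) problem in which the hypothesis $p=4k+3$ enters through the nonresidue status of $-1$. First I would extend the sequence to all of $\mathbb{Z}_p$ by setting $\hat{x}(i)=x_i$ for $1\le i\le p-1$ and $\hat{x}(0)=l$, where $l$ is the omitted value. Reading indices modulo $p$, so that $p-i\equiv -i$, the defining relation $x_i+i\equiv x_{p-i}$ becomes the single reflection identity $\hat{x}(-i)=\hat{x}(i)+i$ for all $i\in\mathbb{Z}_p$, and ``suitable'' becomes exactly: $\hat{x}$ is a bijection of $\mathbb{Z}_p$. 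The two normalizations read $\hat{x}(1)=0$ and $\hat{x}(2)=\tfrac{p+1}{2}$, and I would record that $\tfrac{p+1}{2}\equiv 2^{-1}\pmod p$.

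The next step makes the reflection identity automatic. The identity $\hat{x}(-i)=\hat{x}(i)+i$ holds for all $i$ if and only if $y(i):=2\hat{x}(i)+i$ is an even function of $i$, i.e.\ a function of $i^2$; equivalently $\hat{x}(i)=-\tfrac{i}{2}+H(i^2)$ for some function $H$ on the squares, with $H(0)=l$. The condition $x_1=0$ forces $H(1)=\tfrac{p+1}{2}=2^{-1}$. Taking $H$ constant gives the affine map $\hat{x}(i)=-\tfrac{i}{2}+2^{-1}$, which is automatically a bijection; this recovers precisely Example~\ref{ex:seq}, for which $x_2=-1+2^{-1}=\tfrac{p-1}{2}$. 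Thus the requirement $x_2=\tfrac{p+1}{2}$ is exactly the demand that $H$ be nonconstant (it forces $H$ at the square of $2$ to equal $\tfrac{p+3}{2}$), and the only real content of the proposition is that $H$ can be perturbed off the constant so that $\hat{x}$ stays injective while shifting $x_2$ by $1$.

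I would then reduce injectivity to a difference-packing statement and construct the packing explicitly, guided by the first cases of Examples~\ref{ex:p7} and~\ref{ex:p11}. Writing $u_r=\hat{x}(r)$ and $v_r=\hat{x}(-r)$ for each nonzero square $r$, the reflection identity gives $v_r-u_r=r$ automatically, so the directed differences of the $\tfrac{p-1}{2}$ pairs $(u_r,v_r)$ are exactly the nonzero squares, each once; injectivity of $\hat{x}$ is then equivalent to these pairs having pairwise distinct endpoints, i.e.\ to their partitioning $\mathbb{Z}_p\setminus\{l\}$. This is where $p=4k+3$ is decisive: since $-1$ is a nonresidue, each antipodal pair $\{t,-t\}$ contains exactly one square, so the nonzero squares form a set of one-sided differences with no $\pm$ redundancy, precisely what a single-valued packing needs (for $p\equiv1\pmod4$ the squares are symmetric and the scheme collapses). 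I would exhibit such a pairing explicitly, normalize it so that the pair indexed by the relevant square places $x_2$ at $\tfrac{p+1}{2}$, and read off the omitted value as $l=\tfrac{p-1}{2}$, consistent with both worked examples.

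The main obstacle is proving the explicit construction has no repeated values. I expect to argue by contradiction: a collision $\hat{x}(i)=\hat{x}(j)$ with $j\neq\pm i$ clears to a relation of the form $H(i^2)-H(j^2)=\tfrac{i-j}{2}$, which, after substituting the explicit $H$, should reduce to an equation forcing $a^2\equiv -b^2$ for suitable $a,b$; the nonresidue property of $-1$ then forces $a=b=0$, hence $i=j$. Turning this character computation into a clean, uniform argument valid for every $p=4k+3$, and verifying that the single uncovered residue is exactly $\tfrac{p-1}{2}$, is the delicate part of the proof.
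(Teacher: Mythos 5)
Your reformulation is correct and genuinely clarifying: extending the sequence to a map $\hat{x}$ on $\mathbb{Z}_p$ with $\hat{x}(0)=l$, the defining relation becomes $\hat{x}(-i)=\hat{x}(i)+i$ for all $i$, which holds precisely when $\hat{x}(i)=-\tfrac{i}{2}+H(i^2)$ for some function $H$ on the squares, and suitability is then exactly injectivity of $\hat{x}$; the constant choice $H\equiv 2^{-1}$ recovers Example~\ref{ex:seq}, and the target $x_2=\tfrac{p+1}{2}$ pins down $H(4)=2^{-1}+1\neq H(1)$. All of that checks out, and the predicted omitted value $l=\tfrac{p-1}{2}$ matches both worked examples.

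However, the argument stops exactly where the content of Proposition~\ref{prop:newseq} begins. You never write down a candidate $H$ (equivalently, a candidate pairing), and you explicitly defer both the construction (``I would exhibit such a pairing explicitly'') and its verification (``the delicate part of the proof''). Since the proposition is an existence statement, what remains unproved is the entire statement, merely translated into an equivalent one: find a nonconstant $H$ on the set of squares with $H(1)=2^{-1}$ and $H(4)=2^{-1}+1$ such that $i\mapsto -\tfrac{i}{2}+H(i^2)$ is injective. The proposed injectivity mechanism --- a collision clearing to $a^2\equiv -b^2$, killed because $-1$ is a nonresidue --- is speculative: it presupposes an algebraic form of $H$ that you have not specified, and nothing in the write-up shows such an $H$ exists in that form. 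That the hypothesis $p\equiv 3\pmod 4$ must enter somewhere is real (for $p=5$ the constraints force $x_3=x_2+2=0=x_1$, so the statement is genuinely false there), but your ``no $\pm$ redundancy'' heuristic does not establish that it suffices. For comparison, the paper's proof is entirely combinatorial and explicit: it takes the sequence of Example~\ref{ex:seq}, isolates the $2k+2$ even-indexed positions, observes that their values form two consecutive runs flanking the missing value $\tfrac{p+1}{2}$, and then explicitly permutes those values (with a case split on the parity of $k$) so that the differences $x_{p-i}-x_i\equiv i$ are re-achieved while $\tfrac{p+1}{2}$ is inserted at position $2$ and $\tfrac{p-1}{2}$ becomes the new omitted value. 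Some such concrete construction together with its verification is precisely what your proposal is missing.
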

\begin{proof}
Let $( x_i \mid 1 \leq i \leq p-1)$ be the suitable sequence given in Example \ref{ex:seq}.
First of all, we observe the following fact.

\vskip5pt
\textbf{Claim 1}: $(  x_2, x_4, \dotsc, x_{2k+2}) = (2k+1, 2k \dotsc, k+1)$ and
\[(x_{p-2k-2}, \dotsc, x_{p-4}, x_{p-2} ) = (3k+3, \dotsc, 2k+4, 2k+3).\]
By direct calculation, we have $x_2 = \frac{p-1}{2} = 2k+1$ and $x_{p-2} = 2k + 3$.
Since $x_i - x_{i+2} \equiv  (\frac{p-1}{2})(i-1) - (\frac{p-1}{2})(i+1)  \equiv 1 ~~(\mathrm{mod}~ p)$,
we have $x_4 = 2k$ and $x_{p-4} = 2k+ 4$.
This implies Claim $1$.

\vskip5pt
We denote $\{ x_2, x_4, \dotsc, x_{2k+2}, x_{p-2k-2}, \dotsc, x_{p-4}, x_{p-2} \}$ by $\mathcal{C}$.
Note that \[\mathcal{C} \cup \{ \frac{p+1}{2} \} = \{x \mid k+1 \leq x \leq 3k+3 \} ~~\text{and}\]
\[\{ x_{p-i} - x_i \mid x_i, x_{p-i} \in \mathcal{C}, 2 \leq i \leq 2k \} \cup \{ x_{p-2k-2} - x_{2k+2} \}= \{ 2, 4, \dotsc, 2k, 2k+1\}.\]

\vskip5pt
Rearranging $2k +2$ elements in $\mathcal{C} \cup \{ \frac{p+1}{2} \}$, we construct a new suitable sequence.

\vskip5pt
\textbf{Claim 2}: There exists a sequence $(y_2, y_4, \dotsc, y_{2k},$ $y_{p-2k-2}, y_{2k+2}, y_{p-2k}$ $\dotsc, y_{p-4}, y_{p-2})$
such that
\[y_{p - 2l} - y_{2l} \equiv 2l ~~(\mathrm{mod}~ p) ~~\text{for}~~ 1 \leq l \leq k, \]
\[y_{2k+2} - y_{p-2k-2} \equiv 2k+1 ~~(\mathrm{mod}~ p)  ~~\text{and} \]
\[ \frac{p-1}{2} \not\in \{ y_2, y_4, \dotsc, y_{2k}, y_{p-2k-2}, y_{2k+2}, y_{p-2k}  \dotsc, y_{p-4}, y_{p-2} \}.\]

First of all, we consider the case that $k$ is even.
Put \[(y_{2k+2}, y_{p-2k-2}) = (x_{p-2k-4}, x_{2k}) ~~\text{ and }~~ (y_{p-2}, y_2) = (x_{p-4}, \frac{p+1}{2}).\]
Then we have $y_{2k+2} - y_{p-2k-2} \equiv 2k + 1 ~~(\mathrm{mod}~ p) ~~\text{ and }~~ y_{p-2} - y_2 \equiv 2 ~~(\mathrm{mod}~ p)$.

For each $2 \leq l \leq k$, put \[(y_{p - 2l}, y_{2l})= (x_{p - 2(l-1)}, x_{2(l+1)})  ~~(l : \text{even}),\]
\[(y_{p - 2l}, y_{2l})= (x_{p - 2(l+1)}, x_{2(l-1)}) ~~(l : \text{odd}).\]

Computing $y_{p - 2l} - y_{2l}$ based on Claim $1$, we have $y_{p - 2l} - y_{2l} \equiv 2l$ $(\mathrm{mod}~ p)$.

\vskip5pt
Next, we consider the case that $k$ is odd.
Put \[(y_{2k+2}, y_{p-2k-2}) = (x_{p-2k}, x_{2k+2}) ~~\text{ and }~~ (y_{p-2}, y_2) = (x_{p-4}, \frac{p+1}{2}).\]
For each $2 \leq l \leq k$, put \[(y_{p - 2l}, y_{2l})= (x_{p - 2(l+1)}, x_{2(l-1)}) ~~(l : \text{even}),\]
\[(y_{p - 2l}, y_{2l})= (x_{p - 2(l-1)}, x_{2(l+1)}) ~~(l : \text{odd}).\]
Then we have
$y_{2k+2} - y_{p-2k-2} \equiv 2k+1 ~~(\mathrm{mod}~ p)$, $y_{p-2} - y_2 \equiv 2 ~~(\mathrm{mod}~ p)$ and
$y_{p - 2l} - y_{2l} \equiv 2l ~~(\mathrm{mod}~ p) ~~\text{for}~~ 1 \leq l \leq k$.
This completes the proof of Claim $2$.

\vskip5pt
Therefore, it follows from Claims $1$ and $2$ that $(x_1, y_2, \dotsc, x_{2k-1},$ $y_{2k}, y_{2k+1},$ $y_{2k+2}, y_{2k+3},$ $x_{2k+4}, \dotsc, y_{p-2}, x_{p-1})$ is suitable.
\end{proof}

\begin{prop}\label{prop:suit}
Let $\mathcal{A}$ be the vector space spanned by $\{ \underline{T} \mid T \in \{ \{l\} \mid l \in L \} \cup \{T_i a^{pj} \mid 1 \leq i \leq p-1, 0 \leq j \leq p-1 \} \}$ such that $L = \langle a^p, b \rangle$, $T_i = a^i \langle ba^{px_i} \rangle$ $( 1 \leq i \leq p-1 )$ and $(x_1, x_2, \dotsc, x_{p-1})$ is suitable.
Then $\mathcal{A}$ is a $p$-S-ring over $H_1$.
\end{prop}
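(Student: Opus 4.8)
The plan is to verify directly that the given partition $\mathrm{Bsets}(\mathcal{A})$ satisfies the defining conditions of a $p$-S-ring: that $\{1_{H_1}\}$ is a block, that the partition is closed under $T \mapsto T^{-1}$, that every block has size a power of $p$, and that the span $\mathcal{A}$ is closed under multiplication. The computational backbone is a choice of coordinates on $L = \langle a^p, b\rangle$. Since $a^p b = b a^p$ and both factors have order $p$, the assignment $b^k a^{pm} \mapsto (k,m)$ identifies $L$ with $C_p \times C_p$, $a^p$ generates $Z(H_1)$, and the relation $ab = ba^{p+1}$ shows that conjugation by $a$ acts on $L$ by $(k,m) \mapsto (k, m+k)$. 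In these coordinates $g_i := ba^{px_i} = (1, x_i)$, so $T_i = a^i \langle g_i\rangle$ and $\mathrm{St}_R(T_i) = \langle g_i\rangle$; as the $x_i$ are pairwise distinct, these stabilizers are distinct order-$p$ subgroups of $L$. The elementary bookkeeping comes first: each $\langle g_i\rangle$ has order $p$ and meets $\langle a^p\rangle$ trivially, so the $p$ cosets $T_i a^{pj}$ ($0 \le j \le p-1$) partition $a^iL$, and together with the singletons of $L$ they partition $H_1$; every block has size $1$ or $p$, and $\{1_{H_1}\}$ is a block.

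Next I would establish closure under inversion, where the symmetry part of suitability enters. Computing $T_i^{-1} = \langle g_i\rangle a^{-i}$ and conjugating, one finds $a^i \langle g_i\rangle a^{-i} = \langle (1, x_i+i)\rangle$, hence $T_i^{-1} = a^{-i}\langle ba^{p(x_i+i)}\rangle$. Writing $a^{-i} = a^{p-i}a^{-p}$ and pulling out the central factor $a^{-p}$, the relation $x_i + i \equiv x_{p-i}\ (\mathrm{mod}\ p)$ identifies $\langle ba^{p(x_i+i)}\rangle$ with $\langle ba^{px_{p-i}}\rangle$, so that $T_i^{-1} = T_{p-i}a^{-p}$, a block; centrality of $a^p$ then gives $(T_i a^{pj})^{-1} = T_{p-i}a^{-p(j+1)}$, again a block.

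The heart of the proof is multiplicative closure, which I would reduce to computing $\underline{T_i}\cdot\underline{T_{i'}}$, since $a^{pj}$ and $a^{pj'}$ are central and merely shift the result through the partition. Conjugating $\langle g_i\rangle$ past $a^{i'}$ gives
\[
\underline{T_i}\cdot\underline{T_{i'}} = a^{i+i'}\,\underline{\langle ba^{p(x_i - i')}\rangle}\cdot\underline{\langle g_{i'}\rangle},
\]
so everything hinges on the two order-$p$ subgroups $\langle ba^{p(x_i-i')}\rangle$ and $\langle g_{i'}\rangle = \langle ba^{px_{i'}}\rangle$ of $L \cong C_p\times C_p$. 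If they are distinct they intersect trivially and their set product is all of $L$, whence $\underline{T_i}\cdot\underline{T_{i'}} = \underline{a^{i+i'}L}$, which is the union of the blocks $T_{i''}a^{pk}$ ($0 \le k \le p-1$) filling the coset $a^{i''}L$ with $1 \le i'' \le p-1$, $i'' \equiv i+i'$. If they coincide, the product equals $p\,\underline{\langle g_{i'}\rangle}$ up to the scalar translate $a^{i+i'}$; the decisive point is that this coincidence forces $i+i' \equiv 0\ (\mathrm{mod}\ p)$, so $a^{i+i'}$ is itself central and the product lands inside $\mathbb{Q}L$, i.e. a non-negative integer combination of singleton blocks.

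The step I expect to be the main obstacle, and the only place the full strength of suitability is used, is exactly the claim that $\langle ba^{p(x_i-i')}\rangle = \langle g_{i'}\rangle$ can occur only when $i + i' \equiv 0$. This equality means $x_i \equiv x_{i'} + i'\ (\mathrm{mod}\ p)$; by the suitability relation $x_{i'} + i' \equiv x_{p-i'}$ this becomes $x_i \equiv x_{p-i'}$, and the injectivity of $j \mapsto x_j$ (guaranteed by $\{x_1,\dots,x_{p-1}\} = \mathbb{Z}_p\setminus\{l\}$) forces $i = p-i'$. Were this to fail, the degenerate product $p\,a^{i+i'}\langle g_{i'}\rangle$ would sit in a coset $a^{i''}L$ with $i''\neq 0$ as a single $\langle g_{i'}\rangle$-coset that is \emph{not} a block, its slope $x_{i'}$ not matching $x_{i''}$, and closure would break. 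Granting this, all structure constants are non-negative integers, $\mathcal{A}$ is a subalgebra, and together with the preceding checks it is a $p$-S-ring over $H_1$.
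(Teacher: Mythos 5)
Your proposal is correct and follows essentially the same route as the paper's proof: the same computation $T_iT_{j} = a^{i+j}\langle ba^{p(x_i-j)}\rangle\langle ba^{px_j}\rangle$ with the case split on whether the two order-$p$ subgroups of $L$ coincide (forced by suitability to happen exactly when $j=p-i$), and the same use of $x_i+i\equiv x_{p-i}$ for closure under inversion. You are in fact somewhat more thorough than the paper, which leaves the remaining verifications (translates by central elements, products with singletons) to the reader.
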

\begin{proof}
Put $\mathrm{Bsets(\mathcal{A})} = \{ \{l\} \mid l \in L \} \cup \{T_i a^{pj} \mid 1 \leq i \leq p-1, 0 \leq j \leq p-1 \}$.
Since $x_i + i \equiv x_{p-i}$ $(\mathrm{mod}~ p)$, we have
\begin{eqnarray*}
T_i^{-1}      & = &  \langle ba^{px_i} \rangle a^{-i}  =  \langle ba^{px_i} \rangle a^{p-i} a^{p^2 -p} \\
              & = &   a^{p-i} \langle ba^{(p^2-p)(p-i)}a^{px_i} \rangle a^{p(p-1)}          \\
              & = &   a^{p-i} \langle ba^{pi}a^{px_i} \rangle a^{p(p-1)}  =   a^{p-i} \langle ba^{p(i+ x_i)} \rangle a^{p(p-1)}    \\
              & = &   a^{p-i} \langle ba^{px_{p-i}} \rangle a^{p(p-1)}  =  T_{p-i}a^{p(p-1)} \in \mathrm{Bsets(\mathcal{A})}.
\end{eqnarray*}

For $T_i, T_j \in \mathrm{Bsets(\mathcal{A})}$,
\begin{eqnarray*}
T_i T_j      & = &   a^i \langle ba^{px_i} \rangle  a^j \langle ba^{px_j} \rangle = \cdots = a^{i+j} \langle ba^{p(x_i-j)} \rangle  \langle ba^{px_j} \rangle \\
             & = & \left\{
                      \begin{array}{ll}
                      a^p \langle ba^{px_j} \rangle & \hbox{if $j = p-i$;} \\
                      a^{i+j}L & \hbox{otherwise.}
                      \end{array}
                     \right.
\end{eqnarray*}
This means that $\underline{T_i} \cdot \underline{T_j}$ is written as a linear combination of the elements in $\{ \underline{T} \mid T \in \mathrm{Bsets(\mathcal{A})}\}$.

We leave to the reader to check the remaining part of proof.
\end{proof}

Similarly, we have the following.

\begin{prop}\label{prop:suit2}
Let $\mathcal{A}$ be the vector space spanned by $\{ \underline{T} \mid T \in \{ \{l\} \mid l \in L \} \cup \{T_i c^{j} \mid 1 \leq i \leq p-1, 0 \leq j \leq p-1 \} \}$ such that $L = \langle b, c \rangle$, $T_i = a^i \langle bc^{x_i} \rangle$ $( 1 \leq i \leq p-1 )$ and $(x_1, x_2, \dotsc, x_{p-1})$ is suitable.
Then $\mathcal{A}$ is a $p$-S-ring over $H_2$.
\end{prop}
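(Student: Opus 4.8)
The plan is to follow the proof of Proposition~\ref{prop:suit} almost verbatim, replacing the power arithmetic of $H_1$ by the commutator calculus of $H_2$. Writing $\mathrm{Bsets}(\mathcal{A}) = \{\{l\} \mid l \in L\} \cup \{T_ic^j \mid 1 \le i \le p-1,\ 0 \le j \le p-1\}$, one first checks that these sets partition $H_2$: the singletons exhaust $L=\langle b,c\rangle$, while for fixed $i$ the cosets $T_ic^j$ $(0\le j\le p-1)$ tile $a^iL$, and the cosets $a^0L,\dots,a^{p-1}L$ exhaust $H_2$. Every base set has size $1$ or $p$, so the valency condition of a $p$-S-ring is automatic and $\{1_{H_2}\}$ is among the base sets. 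It then remains to verify closure under inversion and under multiplication, which is where the relations $[a,b]=c$, $[a,c]=[b,c]=1$ and the suitability of $(x_1,\dots,x_{p-1})$ enter.

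First I would record the single identity that drives every computation: since $c$ is central and $ba=abc^{-1}$, an easy induction gives $b^k a^m = a^m b^k c^{-mk}$ for all $k,m$, equivalently $\langle bc^{x_i}\rangle a^j = a^j\langle bc^{x_i-j}\rangle$. Applying this to $T_i^{-1} = \langle bc^{x_i}\rangle a^{-i} = \langle bc^{x_i}\rangle a^{p-i}$ pulls the power of $a$ to the left, yielding $T_i^{-1} = a^{p-i}\langle bc^{x_i+i}\rangle$; the suitability relation $x_i + i \equiv x_{p-i} \pmod p$ then identifies this with $T_{p-i}$. As $c$ is central, $(T_ic^j)^{-1} = T_{p-i}c^{p-j}$, and the singletons are trivially closed under inversion, so condition (iii) holds.

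The main step is the product. The identity above gives, as a set, $T_iT_j = a^{i+j}\langle bc^{x_i-j}\rangle\langle bc^{x_j}\rangle$. The two order-$p$ subgroups $\langle bc^{x_i-j}\rangle$ and $\langle bc^{x_j}\rangle$ of $L \cong C_p \times C_p$ either coincide or together generate all of $L$. I expect the main obstacle to be ruling out an \emph{accidental} coincidence with $j \ne p-i$: such a coincidence would make $\underline{T_i}\cdot\underline{T_j}$ a scalar multiple of the indicator of a single coset of a subgroup that is not one of the $\langle bc^{x_m}\rangle$, hence not a combination of the base sets, which would destroy the basis property. The key point is that suitability forces $j \mapsto x_j + j$ to be injective: the relation $x_i+i \equiv x_{p-i}$ in fact holds for \emph{every} $1 \le i \le p-1$ (rewrite it for the complementary index), so $x_j + j \equiv x_{p-j}$, a composite of the two bijections $j \mapsto p-j$ and $j \mapsto x_j$. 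Hence $x_i - j \equiv x_j$ has the unique solution $j = p-i$, for which $a^{i+j}=1$ and $T_iT_j = \langle bc^{x_j}\rangle \subseteq L$.

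It then only remains to track multiplicities. When $j = p-i$ the product set has size $p$, so each of its elements is hit $p$ times and $\underline{T_i}\cdot\underline{T_j} = p\,\underline{\langle bc^{x_j}\rangle}$, a combination of singletons; otherwise $i+j \not\equiv 0$, the product map $T_i \times T_j \to a^{i+j}L$ is a bijection, and $\underline{T_i}\cdot\underline{T_j} = \underline{a^{m}L} = \sum_{k=0}^{p-1}\underline{T_m c^k}$ with $m \equiv i+j \pmod p$. Products involving a singleton are covered by Proposition~\ref{prop:3}. This shows $\mathcal{A}$ is closed under multiplication and completes the verification; as in Proposition~\ref{prop:suit}, the remaining routine bookkeeping can be left to the reader.
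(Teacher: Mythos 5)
Your proof is correct and takes essentially the same route as the paper: the paper proves Proposition~\ref{prop:suit2} only by the word ``Similarly,'' and your argument is precisely the intended adaptation of the proof of Proposition~\ref{prop:suit}, with the relation $\langle bc^{x}\rangle a^{j}=a^{j}\langle bc^{x-j}\rangle$ replacing the power arithmetic in $H_1$ and the suitability relation again giving $T_i^{-1}=T_{p-i}$ and the dichotomy for $T_iT_j$. You are in fact slightly more careful than the paper in one spot: you explicitly rule out an accidental coincidence $\langle bc^{x_i-j}\rangle=\langle bc^{x_j}\rangle$ for $j\neq p-i$ via the injectivity of $j\mapsto x_j+j$, a point the paper's model proof passes over with ``$\cdots$ otherwise $a^{i+j}L$.''
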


\section{Non-schurian $p$-S-rings arisen from suitable sequences}\label{sec:nonschur}
In this section, we show that a $p$-S-ring over $H_1$ arisen from a suitable sequence is non-Schurian.
For convenience, we omit the subindex $1$ of $H_1$
By Proposition \ref{prop:newseq}, there exists a suitable sequence such that
\[x_1 = 0, x_2 =\frac{p+1}{2}, x_3=p-1, \dotsc, x_{p-1}=1.\]

Using Proposition \ref{prop:suit} and this sequence, one can get a $p$-S-ring $\mathcal{A}$ such that
\begin{equation}\label{bsetsa}
\mathrm{Bsets}(\mathcal{A})=  \{ \{l\} \mid l \in L \} \cup \{T_i a^{pj} \mid 1 \leq i \leq p-1, 0 \leq j \leq p-1 \}.
\end{equation}

By Theorem \ref{mainthm1}, if $\mathcal{A}$ is Schurian, then there exists an element $\sigma \in \Gamma_1$ such that
$T_1 \sim_\sigma T_2 \sim_\sigma T_3 \sim_\sigma T_1$.
In the rest of this section, we prove that any $\sigma \in \Gamma_1$ with $T_1 \sim_\sigma T_2 \sim_\sigma T_3$ does not satisfy $T_1 \sim_\sigma T_3$.

From now on, we assume that for all $1 \leq i \leq p-1$,
\[(a^i, a^iba^{px_i}, a^i(ba^{px_i})^2, \dotsc, a^i(ba^{px_i})^{p-1})\]
is a fixed ordering of $T_i$.

\begin{lem}\label{sec5:123com}
Let $T_i= a^i \langle ba^{px_i} \rangle$, $T_j= a^j \langle ba^{px_j} \rangle$ $(i \leq j)$ and $T_k= a^k \langle ba^{px_k} \rangle$
be elements of $(\ref{bsetsa})$ such that
$R_H(T_k) \cap (T_i \times T_j) \neq \emptyset$.
Then we have
\begin{equation}\label{imequal}
(x_i - x_k - (p-1)i)n \equiv (x_j - x_k - (p-1)i)l ~~(\mathrm{mod}~ p).
\end{equation}
\end{lem}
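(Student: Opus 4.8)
The plan is to compute directly in $H_1 = \langle a, b \mid a^{p^2}=b^p=1,\ ab=ba^{p+1}\rangle$, translating the hypothesis $R_H(T_k)\cap(T_i\times T_j)\neq\emptyset$ into a single group equation and then reading off $(\ref{imequal})$ by comparing exponents modulo $p^2$. First I would fix the pair realizing the intersection: let $\beta = a^i(ba^{px_i})^n$ be the $n$-th element of $T_i$ and $\gamma = a^j(ba^{px_j})^l$ the $l$-th element of $T_j$ in the ordering fixed above, and suppose $(\beta,\gamma)\in R_H(T_k)$. By the definition of $R_H$ this means $\gamma = t\beta$ for some $t\in T_k$, say $t = a^k(ba^{px_k})^m$ with $0\le m\le p-1$.

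The computational engine is two identities in $H_1$. Since $a^p$ is central, $a^{px_i}$ commutes with $b$, so $(ba^{px_i})^n = b^n a^{npx_i}$, and likewise for $j$ and $k$; and from $ab=ba^{p+1}$ one gets by induction the commutation rule $a^s b^n = b^n a^{s(p+1)^n}$. Applying these, each of $\beta,\gamma,t$ takes the normal form $b^{\ast}a^{\ast}$; for instance $\beta = b^n a^{i(p+1)^n + npx_i}$. Because the $a$-exponent only matters modulo $p^2$, I would then replace $(p+1)^t$ by $1+tp$ throughout (as $(p+1)^t\equiv 1+tp \pmod{p^2}$), which collapses $\beta$ to $b^n a^{i + np(i+x_i)}$, and similarly $\gamma = b^l a^{j + lp(j+x_j)}$ and $t = b^m a^{k + mp(k+x_k)}$.

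Next I would expand $\gamma = t\beta$ in normal form, once more using $a^s b^n = b^n a^{s(p+1)^n}\equiv b^n a^{s(1+np)}$ and discarding the terms divisible by $p^2$. Comparing the $b$-exponents gives $l\equiv m+n\pmod p$, while comparing the $a$-exponents gives one congruence modulo $p^2$. Reducing that congruence modulo $p$ first yields $k\equiv j-i\pmod p$; since $T_k\in(\ref{bsetsa})$ forces $1\le k\le p-1$ and we have $1\le i\le j\le p-1$, this in fact pins down $k=j-i$, so the integer carry $(j-k-i)/p$ vanishes. Extracting the coefficient of $p$ in the surviving congruence then leaves a linear relation in $l,m,n$; substituting $m=l-n$ and $k=j-i$ and collecting the coefficients of $l$ and $n$ produces $n(x_i-x_k+i)\equiv l(x_j-x_k+i)\pmod p$. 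Finally, using $-(p-1)i\equiv i\pmod p$ rewrites this as $(\ref{imequal})$.

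The part needing the most care is the bookkeeping modulo $p^2$: one must verify that every cross term carrying a factor $p^2$ (such as $mnp^2x_k$ and the $\binom{t}{2}p^2$ correction in $(p+1)^t$) really drops out, and that the degree-zero comparison forces the carry to be zero so that the coefficient-of-$p$ comparison is unambiguous. Once these two points are settled, the identity $(\ref{imequal})$ follows by routine collection of terms.
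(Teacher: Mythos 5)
Your proposal is correct and is essentially the same argument as the paper's: both translate the hypothesis into the group equation $\gamma=t\beta$ with $t\in T_k$, observe $k=j-i$, and extract the congruence by comparing $b$-exponents and the coefficient of $p$ in the $a$-exponents. The only (cosmetic) difference is that you work in the normal form $b^{\ast}a^{\ast}$ modulo $p^2$ throughout, whereas the paper first conjugates the $T_k$-factor by $a^{-i}$ so that the whole comparison takes place inside the abelian subgroup $L=\langle a^p,b\rangle$; your version also makes explicit the verification that the carry vanishes, which the paper only asserts via ``Note that $k+i=j$.''
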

\begin{proof}
By Lemma \ref{per}, the adjacency matrix of $R_H(T_k) \cap (T_i \times T_j)$ is a permutation matrix.
For each element $\beta$ of $T_j$, there exists a unique $\alpha \in T_i$ such that $(\alpha, \beta) \in R_H(T_k) \cap (T_i \times T_j)$.
By direct calculation, we have
\begin{eqnarray*}
R_H(T_k) \cap (T_i \times T_j) &=&  (\{1_H\} \times a^k \langle ba^{px_k} \rangle)^{a^i \langle ba^{px_i} \rangle} \cap (a^i \langle ba^{px_i} \rangle \times a^j \langle ba^{px_j} \rangle).
\end{eqnarray*}
Note that $k+i=j$.
 For each $0 \leq l \leq p-1$, there exist some $m, n$ such that
\[( ba^{px_k} a^{p(p-1)i} )^m  ( ba^{px_i} )^n = (  ba^{px_j} )^l.\]

The above equation implies
\begin{enumerate}
\item $(x_k+(p-1)i)m + x_in \equiv x_jl$  $(\mathrm{mod}~ p)$,
\item $m + n \equiv l$ $(\mathrm{mod}~ p)$.
\end{enumerate}
It follows from the system of congruences that
\[(x_i - x_k - (p-1)i)n \equiv (x_j - x_k - (p-1)i)l ~~(\mathrm{mod}~ p).\]
\end{proof}

\begin{rem}\label{permutation:cell}
By Lemma \ref{per}, the adjacency matrix of $R_H(T_k) \cap (T_i \times T_j) \neq \emptyset$ is a permutation matrix.
In (\ref{imequal}), there exists a unique $n$ for a given $l$.
This means that for each element of $T_j$,
it is possible to determine a unique element of $T_i$ with respect to $R_H(T_k)$.
\end{rem}

\vskip10pt
Using Lemma \ref{sec5:123com}, we determine the adjacency matrices
of $R_H(T_1) \cap (T_1 \times T_2)$, $R_H(T_1) \cap (T_2 \times T_3)$ and $R_H(T_2) \cap (T_1 \times T_3)$,
where $T_1, T_2, T_3 \in \mathrm{Bsets}(\mathcal{A}) \setminus \mathrm{Bsets}(\mathcal{A}_L)$.

\vskip5pt
\textbf{Case 1}: $R_H(T_1) \cap (T_1 \times T_2)$.

In (\ref{imequal}), if $i=k=1$ and $j=2$, then we have
\[-(p-1)n \equiv (\frac{p+1}{2}-(p-1))l ~~(\mathrm{mod}~ p).\]
So, $n \equiv \frac{-p+3}{2}l ~~(\mathrm{mod}~ p)$.
The adjacency matrix of $R_H(T_1) \cap (T_1 \times T_2)$ is associated with
\begin{equation}\label{case1}
2n \equiv 3l ~~(\mathrm{mod}~ p).
\end{equation}

\vskip5pt
\textbf{Case 2}: $R_H(T_1) \cap (T_2 \times T_3)$.

In (\ref{imequal}), if $i=2, j=3$ and $k=1$, then we have
\[(\frac{p+1}{2} - (p-1)2)n \equiv ((p-1) - (p-1)2)l ~~(\mathrm{mod}~ p).\]
So, $\frac{-p+5}{2}n \equiv l$ $(\mathrm{mod}~ p)$.
The adjacency matrix of $R_H(T_1) \cap (T_2 \times T_3)$ is associated with
\begin{equation}\label{case2}
5n \equiv 2l ~~(\mathrm{mod}~ p).
\end{equation}

\vskip5pt
\textbf{Case 3}: $R_H(T_2) \cap (T_1 \times T_3)$.

In (\ref{imequal}), if $i=1, j=3$ and $k=2$, then we have
\[(-\frac{p+1}{2} - (p-1))n \equiv ((p-1) - \frac{p+1}{2} - (p-1))l ~~(\mathrm{mod}~ p).\]
So, $(p-1)n \equiv l ~~(\mathrm{mod}~ p)$.
The adjacency matrix of $R_H(T_2) \cap (T_1 \times T_3)$ is associated with
\begin{equation}\label{case3}
(p-1)n \equiv l ~~(\mathrm{mod}~ p).
\end{equation}

\vskip5pt
In (\ref{case1}), (\ref{case2}) and (\ref{case3}), if $l=0$, then $n=0$.
So, we have
\[(a,a^2) \in R_H(T_1), (a^2,a^3) \in R_H(T_1) ~~\text{and}~~ (a,a^3) \in R_H(T_2)\]
for $(a,a^2) \in T_1 \times T_2$, $(a^2,a^3) \in T_2 \times T_3$ and $(a,a^3) \in T_1 \times T_3$.

For each $l_i$ $(0 \leq i \leq p-1)$, there exists a unique $n_{l_i}$ with respect to (\ref{case1}).
So, it is possible to define a permutation $\sigma_1 \in \Gamma_1$ such that $T_1 \sim_{\sigma_1} T_2$ and
$\sigma_1|_{T_1} = (l_0, l_1, \dotsc, l_{p-1})$, $\sigma_1|_{T_2} = (n_{l_0}, n_{l_1}, \dotsc, n_{l_{p-1}})$.
For each $n_{l_i}$, applying the same argument for (\ref{case2}),
it is possible to determine a permutation $\sigma \in \Gamma_1$ such that $T_1 \sim_\sigma T_2 \sim_\sigma T_3$.
By Lemma \ref{orderp4} and Theorem \ref{mainthm1}, it suffices to check either $T_1 \sim_\sigma T_3$ or $T_1 \nsim_\sigma T_3$.

\vskip5pt
For the given $\sigma \in \Gamma_1$ with $T_1 \sim_\sigma T_2 \sim_\sigma T_3$, it induces the orbit of $(a,a^3)$ in $T_1 \times T_3$.
In order to describe it, we connect $2m \equiv 3l ~~(\mathrm{mod}~ p)$ and $5n \equiv 2m$ $(\mathrm{mod}~ p)$ given in (\ref{case1}) and (\ref{case2}).
So, we have
\begin{equation}\label{case123}
5n \equiv 3l ~~(\mathrm{mod}~ p).
\end{equation}
Note that $T_1 \sim_\sigma T_3$ if and only if (\ref{case3}) and (\ref{case123}) give the same set of orbits in $T_1 \times T_3$.

\vskip5pt
Suppose $T_1 \sim_\sigma T_3$.
Then $(n,l)=(p-1,1)$ satisfies (\ref{case3}).
Substituting $(p-1,1)$ for $(n,l)$ in (\ref{case123}), we obtain $5p \equiv 8 ~~(\mathrm{mod}~ p)$.
In order to hold $5p \equiv 8 ~~(\mathrm{mod}~ p)$, we shall show that $p$ must be even.
Suppose to the contrary that $p=2k+1$. Then
\begin{eqnarray*}
5(2k+1) \equiv 8 ~~(\mathrm{mod}~ 2k+1)  & \Leftrightarrow &  10k \equiv 3 ~~(\mathrm{mod}~ 2k+1) \\
                                         & \Leftrightarrow &  0 \equiv 8 ~~(\mathrm{mod}~ 2k+1).
\end{eqnarray*}

This is a contradiction.
But, $p=2k$ contradicts the assumption that $p$ is an odd prime.

\vskip10pt
Therefore, it is impossible to exist $\sigma \in \Gamma_1$ such that $T_1 \sim_\sigma T_2 \sim_\sigma T_3 \sim_\sigma T_1$.
We conclude that $\mathcal{A}$ is non-Schurian.

\begin{rem}
By a parallel argument, it is possible to give a non-Schurian $p$-S-ring over $H_2$.
\end{rem}

\vskip15pt
\textbf{Acknowledgement}

The author would like to thank anonymous referees for their valuable comments.

\bibstyle{plain}

\end{document}